\documentclass[submission,copyright,creativecommons]{eptcs}
\usepackage{amsthm,amsmath,amssymb}
\usepackage{tikz}


\pgfdeclarelayer{foreground}
\pgfdeclarelayer{background}
\pgfdeclarelayer{morphismlayer}
\pgfsetlayers{background,main,morphismlayer,foreground}

\usetikzlibrary{matrix}
\usetikzlibrary{decorations.markings}
\usetikzlibrary{shapes.geometric}

\tikzstyle{dot}=[circle, draw=black, fill=black!20, inner sep=.4ex, node on layer=foreground]

\tikzstyle{whitedot}=[circle, draw=black, fill=white, inner sep=.4ex, node on layer=foreground]
\tikzstyle{greydot}=[circle, draw=black, fill=black!20, inner sep=.4ex, node on layer=foreground]
\tikzstyle{darkgreydot}=[circle, draw=black, fill=black!50, inner sep=.4ex, node on layer=foreground]
\tikzstyle{blackdot}=[circle, draw=black, fill=black, inner sep=.4ex, node on layer=foreground]

\tikzstyle{triangle} = [regular polygon, regular polygon sides=3, draw=black, fill=black!20,scale=0.3, node on layer=foreground]

\tikzstyle{whitetriangle}=[triangle, fill=white]
\tikzstyle{greytriangle}=[triangle, fill=black!20]
\tikzstyle{darkgreytriangle}=[triangle, fill=black!50]
\tikzstyle{blacktriangle}=[triangle, fill=black]

\tikzstyle{invertedtriangle} = [triangle,scale=-1]
\tikzstyle{whiteinvertedtriangle}=[invertedtriangle, fill=white]
\tikzstyle{greyinvertedtriangle}=[invertedtriangle, fill=black!20]
\tikzstyle{darkgreyinvertedtriangle}=[invertedtriangle, fill=black!50]
\tikzstyle{blackinvertedtriangle}=[invertedtriangle, fill=black]

\tikzstyle{triangletwo} = [semicircle, draw=black, fill=black!20,scale=0.4, node on layer=foreground]

\tikzstyle{whitetriangletwo}=[triangletwo, fill=white]
\tikzstyle{greytriangletwo}=[triangletwo, fill=black!20]
\tikzstyle{darkgreytriangletwo}=[triangletwo, fill=black!50]
\tikzstyle{blacktriangletwo}=[triangletwo, fill=black]

\tikzstyle{invertedtriangletwo} = [triangletwo,scale=-1]
\tikzstyle{whiteinvertedtriangletwo}=[invertedtriangletwo, fill=white]
\tikzstyle{greyinvertedtriangletwo}=[invertedtriangletwo, fill=black!20]
\tikzstyle{darkgreyinvertedtriangletwo}=[invertedtriangletwo, fill=black!50]
\tikzstyle{blackinvertedtriangletwo}=[invertedtriangletwo, fill=black]

\tikzstyle{morphism}=[font=\small,morphismshape]
\tikzstyle{box}=[rectangle,inner sep=.4ex, draw=black, node on layer=foreground]

\tikzstyle{groundtwo} = [semicircle, draw=black, fill=white,scale=0.8, node on layer=foreground]

\newif\ifvflip\pgfkeys{/tikz/vflip/.is if=vflip}
\newif\ifhflip\pgfkeys{/tikz/hflip/.is if=hflip}
\newif\ifhvflip\pgfkeys{/tikz/hvflip/.is if=hvflip}
\newlength\morphismheight
\setlength\morphismheight{4mm}
\newlength\wedgewidth
\setlength\wedgewidth{8pt}
\tikzset{width/.initial=1mm}

\newlength\minimummorphismwidth
\setlength\minimummorphismwidth{0.3cm}
\newlength\stateheight
\setlength\stateheight{0.6cm}
\newlength\minimumstatewidth
\setlength\minimumstatewidth{0.89cm}
\newlength\connectheight
\setlength\connectheight{0.5cm}
\tikzset{colour/.initial=white}

\tikzstyle{mixed}=[line width=.7pt]
\tikzstyle{pure}=[line width=.7pt]
\tikzset{diredge/.style={decoration={
  markings,
  mark=at position 0.525 with {\arrow{#1}}},postaction={decorate}}}
\tikzset{
    diredge/.default=>
}

\tikzset{diredgestart/.style={decoration={
  markings,
  mark=at position 4pt with {\arrow{#1}}},postaction={decorate}}}
\tikzset{
    diredgestart/.default=<
}

\tikzset{diredgeend/.style={decoration={
  markings,
  mark=at position 1 with {\arrow{#1}}},postaction={decorate}}}
\tikzset{
    diredgeend/.default=>
}

\makeatletter

\pgfdeclareshape{morphismshape}
{
    \savedanchor\centerpoint
    {
        \pgf@x=0pt
        \pgf@y=0pt
    }
    \anchor{center}{\centerpoint}
    \anchorborder{\centerpoint}
    \saveddimen\overallwidth
    {
        \pgfkeysgetvalue{/tikz/width}{\minwidth}
        \pgf@x=\wd\pgfnodeparttextbox
        \ifdim\pgf@x<\minwidth
            \pgf@x=\minwidth
        \fi
    }
    \savedanchor{\upperrightcorner}
    {
        \pgf@y=.5\ht\pgfnodeparttextbox
        \advance\pgf@y by -.5\dp\pgfnodeparttextbox
        \pgf@x=.5\wd\pgfnodeparttextbox
    }
    \anchor{north}
    {
        \pgf@x=0pt
        \pgf@y=0.5\morphismheight
    }
    \anchor{north east}
    {
        \pgf@x=\overallwidth
        \multiply \pgf@x by 4
        \divide \pgf@x by 5
        \pgf@y=0.5\morphismheight
    }
    \anchor{east}
    {
        \pgf@x=\overallwidth
        \divide \pgf@x by 2
        \advance \pgf@x by 5pt
        \pgf@y=0pt
    }
    \anchor{west}
    {
        \pgf@x=-\overallwidth
        \divide \pgf@x by 2
        \advance \pgf@x by -5pt
        \pgf@y=0pt
    }
    \anchor{north west}
    {
        \pgf@x=-\overallwidth
        \multiply \pgf@x by 4
        \divide \pgf@x by 5
        \pgf@y=0.5\morphismheight
    }
    \anchor{south east}
    {
        \pgf@x=\overallwidth
        \multiply \pgf@x by 4
        \divide \pgf@x by 5
        \pgf@y=-0.5\morphismheight
    }
    \anchor{south west}
    {
        \pgf@x=-\overallwidth
       \multiply \pgf@x by 4
        \divide \pgf@x by 5
        \pgf@y=-0.5\morphismheight
    }
    \anchor{south}
    {
        \pgf@x=0pt
        \pgf@y=-0.5\morphismheight
    }
    \anchor{text}
    {
        \upperrightcorner
        \pgf@x=-\pgf@x
        \pgf@y=-\pgf@y
    }
    \backgroundpath
    {
    \begin{scope}
        \pgfkeysgetvalue{/tikz/fill}{\morphismfill}
        \pgfsetstrokecolor{black}
        \begin{scope}
        \pgfsetstrokecolor{black}
        \pgfsetfillcolor{white}
                \ifhflip
                    \pgftransformyscale{-1}
                \fi
                \ifvflip
                    \pgftransformxscale{-1}
                \fi
                \ifhvflip
                    \pgftransformxscale{-1}
                    \pgftransformyscale{-1}
                \fi
                \pgfpathmoveto{\pgfpoint
                    {-0.5*\overallwidth-5pt}
                    {0.5*\morphismheight}}
                \pgfpathlineto{\pgfpoint
                    {0.5*\overallwidth+5pt}
                    {0.5*\morphismheight}}
                \pgfpathlineto{\pgfpoint
                    {0.5*\overallwidth + \wedgewidth}
                    {-0.5*\morphismheight}}
                \pgfpathlineto{\pgfpoint
                    {-0.5*\overallwidth-5pt}
                    {-0.5*\morphismheight}}
                \pgfpathclose
                \pgfusepath{fill,stroke}
        \end{scope}
    \end{scope}
    }
}

\pgfdeclareshape{ground}
{
    \savedanchor\centerpoint
    {
        \pgf@x=0pt
        \pgf@y=0pt
    }
    \anchor{center}{\centerpoint}
    \anchorborder{\centerpoint}

    \anchor{north}
    {
        \pgf@x=0pt
        \pgf@y=0.16\stateheight
    }
    \anchor{south}
    {
        \pgf@x=0pt
        \pgf@y=0pt
    }
    \saveddimen\overallwidth
    {
        \pgfkeysgetvalue{/pgf/minimum width}{\minwidth}
        \pgf@x=\minimumstatewidth
        \ifdim\pgf@x<\minwidth
            \pgf@x=\minwidth
        \fi
    }
    \backgroundpath
    {
        \begin{pgfonlayer}{foreground}
        \pgfsetstrokecolor{black}
        \pgfsetlinewidth{1.25pt}
        \ifhflip
            \pgftransformyscale{-1}
        \fi
        \pgftransformscale{0.5}
        \pgfpathmoveto{\pgfpoint{-0.5*\overallwidth}{0pt}}
        \pgfpathlineto{\pgfpoint{0.5*\overallwidth}{0pt}}
        \pgfpathmoveto{\pgfpoint{-0.33*\overallwidth}{0.33*\stateheight}}
        \pgfpathlineto{\pgfpoint{0.33*\overallwidth}{0.33*\stateheight}}
        \pgfpathmoveto{\pgfpoint{-0.16*\overallwidth}{0.66*\stateheight}}
        \pgfpathlineto{\pgfpoint{0.16*\overallwidth}{0.66*\stateheight}}
        \pgfpathmoveto{\pgfpoint{-0.02*\overallwidth}{\stateheight}}
        \pgfpathlineto{\pgfpoint{0.02*\overallwidth}{\stateheight}}
        \pgfusepath{stroke}
        \end{pgfonlayer}
    }
}

\makeatletter
\pgfkeys{%
  /tikz/on layer/.code={
    \pgfonlayer{#1}\begingroup
    \aftergroup\endpgfonlayer
    \aftergroup\endgroup
  },
  /tikz/node on layer/.code={
    \gdef\node@@on@layer{%
      \setbox\tikz@tempbox=\hbox\bgroup\pgfonlayer{#1}\unhbox\tikz@tempbox\endpgfonlayer\egroup}
    \aftergroup\node@on@layer
  },
  /tikz/end node on layer/.code={
    \endpgfonlayer\endgroup\endgroup
  }
}
\def\node@on@layer{\aftergroup\node@@on@layer}
\makeatother

\makeatother
\newcommand{\tinymult}[1][dot]{
\smash{\raisebox{-2pt}{\hspace{-5pt}\ensuremath{\begin{pic}[scale=0.4,yscale=-1]
    \node (0) at (0,0) {};
    \node[#1, inner sep=1.5pt] (1) at (0,0.55) {};
    \node (2) at (-0.5,1) {};
    \node (3) at (0.5,1) {};
    \draw [pure] (0.center) to (1.center);
    \draw [pure] (1.center) to [out=left, in=down, out looseness=1.5] (2.center);
    \draw [pure] (1.center) to [out=right, in=down, out looseness=1.5] (3.center);
    \pgfsetlinewidth{0.7pt}
    \node[#1, inner sep=1.5pt] (1) at (0,0.55) {};
\end{pic}
}\hspace{-3pt}}}}

\newcommand{\tinyunit}[1][dot]{
\smash{\raisebox{1pt}{\hspace{-3pt}\ensuremath{\begin{pic}[scale=0.4,yscale=-1]
    \node (0) at (0,0) {};
    \node[#1, inner sep=1.5pt] (1) at (0,0.55) {};
    \draw [pure] (0.center) to (1.north);
\end{pic}
}\hspace{-1pt}}}}

\newcommand{\tinyfrob}[1][triangle]{
\smash{\raisebox{0pt}{\hspace{-3pt}\ensuremath{\begin{pic}[scale=0.4]
    \node (0) at (0,0) {};
    \node[#1, inner sep=1.5pt,scale=2] (1) at (0,0.4) {};
    \node (2) at (0,1) {};
    \draw (1.north) to (2.center);
    \draw [pure] (0.center) to (1.south);
\end{pic}
}\hspace{-1pt}}}}

\newcommand{\tinyfrobtwo}[1][triangletwo]{
\smash{\raisebox{0pt}{\hspace{-3pt}\ensuremath{\begin{pic}[scale=0.4]
    \node (0) at (0,0) {};
    \node[#1, inner sep=1.5pt,scale=2] (1) at (0,0.4) {};
    \node (2) at (0,1) {};
    \draw (1.north) to (2.center);
    \draw [pure] (0.center) to (1.south);
\end{pic}
}\hspace{-1pt}}}}

\newcommand{\tinyground}[1][ground]{
\smash{\raisebox{-2pt}{\hspace{-3pt}\ensuremath{\begin{pic}[scale=0.4]
    \node[#1, scale=0.6] (1) at (0,0.4) {};
    \draw [pure] (1.south) to +(0,-.3);
\end{pic}
}\hspace{-1pt}}}}

\newcommand{\tinygroundtwo}[1][groundtwo]{
\smash{\raisebox{-2pt}{\hspace{-3pt}\ensuremath{\begin{pic}[scale=0.4]
    \node[#1, scale=0.6] (1) at (0,0.4) {};
    \draw [pure] (1.south) to +(0,-.3);
\end{pic}
}\hspace{-1pt}}}}

\newcommand{\tinyz}[1][z]{
\smash{\raisebox{-2pt}{\hspace{-3pt}\ensuremath{\begin{pic}
    \node[box] (1) at (0,0) {#1};
\end{pic}
}\hspace{-1pt}}}}

\newcommand{\tinypants}{
\smash{\raisebox{-2pt}{\hspace{-3pt}\ensuremath{\begin{pic}[xscale=0.2,yscale=0.1]
		\node (1) at (-1.25, -1.25) {};
		\node (2) at (-5.75, -1.25) {};
		\node (3) at (-2.75, 1.25) {};
		\node (4) at (-4.25, 1.25) {};
		\node (14) at (-4.25, -1.25) {};
		\node (17) at (-2.75, -1.25) {};
		\draw [diredge, in=-90, out=90] (1.center) to (3.center);
		\draw [diredge, in=90, out=-90] (4.center) to (2.center);
		\draw [diredge, in=90, out=90, looseness=2.00] (14.center) to (17.center);
\end{pic}
}\hspace{-1pt}}}}

\newcommand{\tinycup}{
\smash{\raisebox{-2pt}{\hspace{-3pt}\ensuremath{\begin{pic}[xscale=0.2,yscale=0.2]
		\node (10) at (3.75, 0.25) {};
		\node (12) at (5.25, 0.25) {};
		\draw [diredge, in=-90, out=-90, looseness=2.00] (10.center) to (12.center);
\end{pic}
}\hspace{-1pt}}}}

\newcommand{\tinycap}{
\smash{\raisebox{-2pt}{\hspace{-3pt}\ensuremath{\begin{pic}[xscale=-0.2,yscale=-0.2]
		\node (10) at (3.75, 0.25) {};
		\node (12) at (5.25, 0.25) {};
		\draw [diredge, in=-90, out=-90, looseness=2.00] (10.center) to (12.center);
\end{pic}
}\hspace{-1pt}}}}

\newcommand{\tinyring}{
\smash{\raisebox{-2pt}{\hspace{-3pt}\ensuremath{\begin{pic}[scale=0.5]
		\draw[diredge=<] (1.4,0.5) to[out=-90,in=0] (1.15,0.25) to[out=180,in=-90] (0.9,0.5) to[out=90,in=180] (1.15,0.75) to[out=0,in=90] (1.4,0.5);
\end{pic}
}\hspace{-1pt}}}}

\newenvironment{pic}[1][]
{\begin{aligned}\begin{tikzpicture}[font=\tiny,#1]}
{\end{tikzpicture}\end{aligned}}

\newcommand{\cat}[1]{\ensuremath{\mathbf{#1}}}

\newcommand{\pure}{\ensuremath{{}^{\text{pure}}}}
\newcommand{\id}[1][]{\ensuremath{\mathrm{id}_{#1}}}
\DeclareMathOperator{\cps}{CP*}
\DeclareMathOperator{\cpm}{CPM}

\theoremstyle{plain}
\newtheorem{theorem}{Theorem}

\newtheorem{corollary}[theorem]{Corollary}
\theoremstyle{definition}
\newtheorem{definition}[theorem]{Definition}

\title{Axiomatizing complete positivity}
\author{Oscar Cunningham\thanks{Supported by EPSRC Studentship OUCL/2014/OAC.} \hspace*{1ex}and Chris Heunen\thanks{Supported by EPSRC Fellowship EP/L002388/1.}
\email{\{oscar.cunningham,heunen\}@cs.ox.ac.uk}
\institute{University of Oxford, Department of Computer Science}}

\begin{document}
\maketitle
\begin{abstract}
  There are two ways to turn a categorical model for pure quantum theory into one for mixed quantum theory, both resulting in a category of completely positive maps. One has quantum systems as objects, whereas the other also allows classical systems on an equal footing. The former has been axiomatized using environment structures. We extend this axiomatization to the latter by introducing decoherence structures.
\end{abstract}

\section{Introduction}

One of the main draws of categorical quantum mechanics is that it allows models of quantum theory different than that of Hilbert spaces~\cite{abramskycoecke:categoricalsemantics}. Thus one can investigate conceptually which categorical features are responsible for which operational features of quantum theory.
However, this advantage only applies to models of pure quantum theory, compact dagger categories $\cat{C}\pure$.
To model mixed quantum theory requires adding structure on top of $\cat{C}\pure$.
This is accomplished by the following two constructions:
\begin{itemize}
  \item The category $\cpm[\cat{C}\pure]$ has the same objects as $\cat{C}\pure$, but \emph{completely positive maps} between them as morphisms~\cite{selinger:cpm}. Applied to the category of finite-dimensional Hilbert spaces and linear maps, this results in completely positive maps between quantum systems. Hence this category can model dynamics between quantum systems.\footnote{Another construction can add classical systems in a second stage~\cite[Section~5]{selinger:cpm}. That \emph{biproduct completion} is not considered here because it is of a different nature than the $\cpm$ and $\cps$ constructions~\cite{heunenkissingerselinger:idempotents}.}
  \item The category $\cps[\cat{C}\pure]$ extends this by also allowing classical systems~\cite{coeckeheunenkissinger:cpstar}. When applied to the category of finite-dimensional Hilbert spaces and linear maps, this results in completely positive maps between arbitrary finite-dimensional C*-algebras. Hence this category can model arbitrary dynamics, including measurement and controlled preparation of quantum systems.
\end{itemize}
This state of affairs is somewhat unsatisfactory. It would be more in line with the categorical quantum mechanics programme to start off with a category with features that conceptually model mixed quantum theory, rather than having to start off with a category $\cat{C}\pure$ with features that conceptually model pure quantum theory and then bolt more features on top by hand to model mixed quantum theory. That is, we would prefer to \emph{axiomatize} categories modelling mixed quantum theory.
For the CPM construction this has been done~\cite{coecke:cpm,coeckeperdrix:environment}: one can use \emph{environment structures} to see when a given category is of the form $\cpm[\cat{C}\pure]$. 
In this paper, we axiomatize the $\cps$ construction: we introduce \emph{decoherence structures} that allow one to tell when a given category is of the form $\cps[\cat{C}\pure]$. In fact, we show that both $\cpm[\cat{C}\pure]$ and $\cps[\cat{C}\pure]$ satisfy a universal property, paying special attention to the role of \emph{purification} in both~\cite{chiribelladarianoperinotti:derivation}.

The rest of this paper is laid out as follows. Sections~\ref{sec:cpm} and~\ref{sec:cps} discuss the $\cpm$ and $\cps$ constructions and their axiomatization, respectively. Each section first recalls the construction, characterizes the construction in a universal way,  introduces environment/decoherence structures,  and deduces the axiomatization from the universal property.

\paragraph{Future work}

Our axiomatization of $\cps$ raises many interesting questions for future work:
\begin{itemize}
  \item Environment structures have a relatively straightforward physical interpretation, namely discarding the information in a quantum system. The `correct' physical interpretation of decoherence structures is a lot less clear.
  \item Decoherence structures are intimately related to splitting certain idempotents in $\cpm[\cat{C}\pure]$, and relationships to~\cite{selinger:idempotents,heunenkissingerselinger:idempotents} should be investigated. 
  \item Similarly, our axiomatization should give new clues about the seemingly difficult open problem of identifying the Frobenius structures in $\cpm[\cat{C}\pure]$ and $\cps[\cat{C}\pure]$~\cite{heunenboixo:cp,heunenvicarywester:2cp}.
  \item The current work is purely categorical. We expect decoherence structures to give interesting structure in examples such as the category $\cat{Rel}$ of sets and relations.
  \item The axiomatizations of $\cat{C}=\cpm[\cat{C}\pure]$ and $\cat{C}=\cps[\cat{C}\pure]$ both require the user to specify a subcategory $\cat{C}\pure$ of pure morphisms. Ideally this subcategory should be constructed out of $\cat{C}$ itself. There are proposals~\cite{chiribella:pure} that work for the category $\cat{FHilb}$ of finite-dimensional Hilbert spaces, but they do not even lead to a well-defined category in the case of $\cat{Rel}$.
\end{itemize}

\section{Environment structures}\label{sec:cpm}

This section concerns the CPM construction~\cite{selinger:cpm}. We first recall the construction itself and characterizes it in a universal way. We then review environment structures and, by using the universal property of $\cpm$, provide a new~\cite{coecke:cpm,coeckeperdrix:environment,coeckeheunen:infdim} proof that they axiomatize $\cpm$.

We freely make use of compact dagger categories and their graphical calculus~\cite{selinger:graphicallanguages}.
When wires of both types $A$ and $A^*$ arise in one diagram, we will decorate them with arrows in opposite directions. When possible we will suppress coherence isomorphisms in formulae. Finally, recall that $(-)_*$ reverses the order of tensor products, so $f_*$  has type $A^* \to B^* \otimes C^*$ when $f \colon A \to C \otimes B$~\cite{selinger:cpm}.  

\begin{definition}
  If $\cat{C}\pure$ is a compact dagger category, $\cpm[\cat{C}\pure]$ is the compact dagger category where:
  \begin{itemize}
  \item objects of $\cpm[\cat{C}\pure]$ are the same as those of $\cat{C}\pure$;
  \item morphisms $A \to B$ in $\cpm[\cat{C}\pure]$ are morphisms in $\cat{C}\pure$ of the form
    \[\begin{pic}[scale=.75]
      \node[morphism,vflip] (l) at (-.5,0) {$f$};
      \node[morphism] (r) at (.5,0) {$f$};
      \draw[diredge] (l.south) to +(0,-.5) node[below] {$A$};
      \draw[diredge=<] (r.south) to +(0,-.5) node[below] {$A$};
      \draw[diredge=<] (l.north west) to +(0,.5) node[above] {$B$};
      \draw[diredge] (r.north east) to +(0,.5) node[above] {$B$};
      \draw[diredge] (r.north west) to[out=90,in=90] node[above] {$X$} (l.north east);
    \end{pic}\]
    for some object $X$ and morphism $f \colon A \to X \otimes B$ in $\cat{C}\pure$;
  \item identities are inherited from $\cat{C}\pure$, and composition is defined as follows;
    \[\left(\begin{pic}[scale=.75]
      \node[morphism,vflip] (l) at (-.5,0) {$g$};
      \node[morphism] (r) at (.5,0) {$g$};
      \draw[diredge] (l.south) to +(0,-.5) node[below] {$B$};
      \draw[diredge=<] (r.south) to +(0,-.5) node[below] {$B$};
      \draw[diredge=<] (l.north west) to +(0,.5) node[above] {$C$};
      \draw[diredge] (r.north east) to +(0,.5) node[above] {$C$};
      \draw[diredge] (r.north west) to[out=90,in=90] (l.north east);
    \end{pic}\right) \circ 
    \left(\begin{pic}[scale=.75]
      \node[morphism,vflip] (l) at (-.5,0) {$f$};
      \node[morphism] (r) at (.5,0) {$f$};
      \draw[diredge] (l.south) to +(0,-.5) node[below] {$A$};
      \draw[diredge=<] (r.south) to +(0,-.5) node[below] {$A$};
      \draw[diredge=<] (l.north west) to +(0,.5) node[above] {$B$};
      \draw[diredge] (r.north east) to +(0,.5) node[above] {$B$};
      \draw[diredge] (r.north west) to[out=90,in=90] (l.north east);
    \end{pic}\right)
    \quad = \quad
    \begin{pic}[scale=.75]
      \node[morphism,vflip] (l) at (-.75,0) {$f$};
      \node[morphism] (r) at (.75,0) {$f$};
      \node[morphism,vflip] (lt) at (-.75,1) {$g$};
      \node[morphism] (rt) at (.75,1) {$g$};
      \draw[diredge] (l.south) to +(0,-.5) node[below] {$A$};
      \draw[diredge=<] (r.south) to +(0,-.5) node[below] {$A$};
      \draw[diredge=<] (lt.north west) to +(0,.5) node[above] {$C$};
      \draw[diredge] (rt.north east) to +(0,.5) node[above] {$C$};
      \draw[diredge=<] (l.north west) to (lt.south west);
      \draw[diredge] (r.north east) to (rt.south east);
      \draw[diredge] (rt.north west) to[out=90,in=90] (lt.north east);
      \draw[diredge] (r.north west) to[out=120,in=-90] ([xshift=-4mm]rt.north west) to[out=90,in=90] ([xshift=4mm]lt.north east) to[out=-90,in=60] (l.north east);
    \end{pic}
    \]
  \item the tensor unit $I$ and the tensor product of objects are inherited from $\cat{C}\pure$, and the tensor product of morphisms is defined as follows;
    \[\left(\begin{pic}[scale=.75]
      \node[morphism,vflip] (l) at (-.5,0) {$f$};
      \node[morphism] (r) at (.5,0) {$f$};
      \draw[diredge] (l.south) to +(0,-.5) node[below] {$A$};
      \draw[diredge=<] (r.south) to +(0,-.5) node[below] {$A$};
      \draw[diredge=<] (l.north west) to +(0,.5) node[above] {$B$};
      \draw[diredge] (r.north east) to +(0,.5) node[above] {$B$};
      \draw[diredge] (r.north west) to[out=90,in=90] (l.north east);
    \end{pic}\right) \otimes
    \left(\begin{pic}[scale=.75]
      \node[morphism,vflip] (l) at (-.5,0) {$g$};
      \node[morphism] (r) at (.5,0) {$g$};
      \draw[diredge] (l.south) to +(0,-.5) node[below] {$C$};
      \draw[diredge=<] (r.south) to +(0,-.5) node[below] {$C$};
      \draw[diredge=<] (l.north west) to +(0,.5) node[above] {$D$};
      \draw[diredge] (r.north east) to +(0,.5) node[above] {$D$};
      \draw[diredge] (r.north west) to[out=90,in=90] (l.north east);
    \end{pic}\right)
    \quad = \quad
    \begin{pic}[scale=.75]
      \node[morphism,vflip] (l) at (-.5,0) {$g$};
      \node[morphism] (r) at (.5,0) {$g$};
      \draw[diredge] (l.south) to +(0,-.5) node[below] {$C$};
      \draw[diredge=<] (r.south) to +(0,-.5) node[below] {$C$};
      \draw[diredge=<] (l.north west) to +(0,.5) node[above] {$D$};
      \draw[diredge] (r.north east) to +(0,.5) node[above] {$D$};
      \draw[diredge] (r.north west) to[out=90,in=90] (l.north east);
      \node[morphism,vflip] (ll) at (-1.5,0) {$f$};
      \node[morphism] (rr) at (1.5,0) {$f$};
      \draw[diredge] (ll.south) to +(0,-.5) node[below] {$A$};
      \draw[diredge=<] (rr.south) to +(0,-.5) node[below] {$A$};
      \draw[diredge=<] (ll.north west) to +(0,.5) node[above] {$B$};
      \draw[diredge] (rr.north east) to +(0,.5) node[above] {$B$};
      \draw[diredge] (rr.north west) to[out=120,in=60,looseness=.6] (ll.north east);
    \end{pic}
    \]
  \item the dagger is defined as follows.
    \[\left(\begin{pic}[scale=.75]
      \node[morphism,vflip] (l) at (-.5,0) {$f$};
      \node[morphism] (r) at (.5,0) {$f$};
      \draw[diredge] (l.south) to +(0,-.5) node[below] {$A$};
      \draw[diredge=<] (r.south) to +(0,-.5) node[below] {$A$};
      \draw[diredge=<] (l.north west) to +(0,.5) node[above] {$B$};
      \draw[diredge] (r.north east) to +(0,.5) node[above] {$B$};
      \draw[diredge] (r.north west) to[out=90,in=90] (l.north east);
    \end{pic}\right)^\dag
    \quad = \quad
    \begin{pic}[scale=.75]
      \node[morphism,hvflip] (l) at (-.75,0) {$f$};
      \node[morphism,hflip] (r) at (.75,0) {$f$};
      \draw[diredge] (l.south west) to +(0,-.5) node[below] {$B$};
      \draw[diredge=<] (r.south east) to +(0,-.5) node[below] {$B$};
      \draw[diredge=<] (l.north) to +(0,.5) node[above] {$A$};
      \draw[diredge] (r.north) to +(0,.5) node[above] {$A$};
      \draw[diredge] (l.south east) to[out=-90,in=-90,looseness=2] +(.33,0) to +(0,.75) to[out=90,in=90] +(.4,.75) to +(0,-.75) to[out=-90,in=-90,looseness=2] (r.south west);
    \end{pic}
    \]
  \end{itemize}
\end{definition}

There is a canonical functor $P \colon \cat{C}\pure \to \cpm[\cat{C}\pure]$, defined by $P(A)=A$ on objects and $P(f) = f_* \otimes f$ on morphisms. This is a \emph{monoidal dagger functor}: it preserves daggers, and there are a unitary natural transformation $P_2 \colon P(A) \otimes P(B) \to P(A \otimes B)$ and a unitary morphism $P_0 \colon I \to P(I)$ satisfying the appropriate coherence conditions; we will suppress $P_2$ and $P_0$.

Our first main result characterizes $\cpm[\cat{C}\pure]$ (up to monoidal dagger isomorphism) by a universal property. 

\begin{theorem}\label{thm:cpmuniversal}
  For a compact dagger category $\cat{C}\pure$, consider the following category. Objects $(\cat{D},D,\tinyground)$ are categories $\cat{D}$ equipped with a monoidal dagger functor $D \colon \cat{C}\pure \to \cat{D}$ and a morphism $\tinyground \colon D(A) \to I$ for each object $A$ of $\cat{C}\pure$, satisfying:
  \begin{align}
    \begin{pic}
      \node[ground] (1) at (0,.5) {};
      \draw[pure]  (0,0) node[below] {$D(A\otimes B)$} to (1.south);
    \end{pic}
    =
    \begin{pic}
      \node[ground] (1) at (0,.5) {};
      \draw[pure]  (0,0) node[below] {$D(A)$} to (1.south);
      \node[ground] (2) at (0.6,.5) {};
      \draw[pure]  (0.6,0) node[below] {$D(B)$} to (2.south);
    \end{pic}
    \qquad\qquad
    \begin{pic}
      \node[ground] (1) at (0,.5) {};
      \draw[pure]  (0,0) node[below] {$D(I)$} to (1.south);
    \end{pic}
    & =
    \qquad
    \qquad
    \begin{pic}
      \node[ground] (1) at (0,.5) {};
      \draw [pure] (0,0) node[below] {$D(A)$} to (1.south);
    \end{pic}
    =
    \begin{pic}
      \draw (0,0) node[below] {$D(A)$} to (0,.5);
      \draw[diredge] (0,.5) to[out=90,in=90,looseness=1.5] (.6,.5) node[ground,hflip]{};
    \end{pic}
    \label{eq:environment}
    \\
    \begin{pic}[scale=.75]
      \node[morphism,vflip] (l) at (-.5,0) {$f$};
      \node[morphism] (r) at (.5,0) {$f$};
      \draw[diredge] (l.south) to +(0,-.5);
      \draw[diredge=<] (r.south) to +(0,-.5);
      \draw[diredge=<] (l.north west) to +(0,.5);
      \draw[diredge] (r.north east) to +(0,.5);
      \draw[diredge] (r.north west) to[out=90,in=90] (l.north east);
    \end{pic}
    =
    \begin{pic}[scale=.75]
      \node[morphism,vflip] (l) at (-.5,0) {$g$};
      \node[morphism] (r) at (.5,0) {$g$};
      \draw[diredge] (l.south) to +(0,-.5);
      \draw[diredge=<] (r.south) to +(0,-.5);
      \draw[diredge=<] (l.north west) to +(0,.5);
      \draw[diredge] (r.north east) to +(0,.5);
      \draw[diredge] (r.north west) to[out=90,in=90] (l.north east);
    \end{pic}
    \qquad
    \text{in $\cat{C}\pure$}
    \quad
    & \iff
    \quad
    \begin{pic}[scale=.75]
      \node[morphism] (f) at (0,.5) {$D(f)$};
      \draw (f.south) to +(0,-.5);
      \draw ([xshift=3mm]f.north) to +(0,.5);
      \draw ([xshift=-3mm]f.north) to +(0,.2)node[ground,scale=.75]{};
    \end{pic}
    =
    \begin{pic}[scale=.75]
      \node[morphism] (f) at (0,.5) {$D(g)$};
      \draw (f.south) to +(0,-.5);
      \draw ([xshift=3mm]f.north) to +(0,.5);
      \draw ([xshift=-3mm]f.north) to +(0,.2)node[ground,scale=.75]{};
    \end{pic}
    \qquad
    \text{in $\cat{D}$}
  \label{eq:unique}
  \end{align}
  Morphisms $(\cat{D},D,\tinyground) \to (\cat{D'},D',\tinygroundtwo)$ are monoidal dagger functors $F \colon \cat{D} \to \cat{D}'$ such that $F \circ D = D'$ and $F(\tinyground)=\tinygroundtwo$.
  Then:
  \begin{itemize}
  \item $(\cat{D},D,\tinyground)$ is initial in this category if and only if
    \begin{align}
    \text{every morphism of $\cat{D}$ is of the form }
    \begin{pic}[scale=.75]
      \node[morphism] (f) at (0,.5) {$D(f)$};
      \draw (f.south) to +(0,-.5);
      \draw ([xshift=3mm]f.north) to +(0,.5);
      \draw ([xshift=-3mm]f.north) to +(0,.2)node[ground,scale=.75]{};
    \end{pic}.
    \label{eq:purification}
    \end{align}
  \item We may choose $\tinyground$ so that $(\cpm[\cat{C}\pure],P,\tinyground)$ is initial in this category.
  \end{itemize}
\end{theorem}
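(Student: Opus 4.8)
The idea is to isolate a single closure lemma, use it to characterize the initial objects, check its hypotheses for $\cpm[\cat{C}\pure]$ equipped with its canonical grounds, and then transport initiality along isomorphisms. Call a morphism of $\cat{D}$ \emph{reduced} when it has the form on the right of \eqref{eq:purification}, that is, $\tinyground \circ D(f)$ for some object $X$ and some $f \colon A \to X \otimes B$ of $\cat{C}\pure$, the ground $\tinyground \colon D(X) \to I$ acting on the $D(X)$-leg of $D(f)$. Since $D$ is a monoidal dagger functor it preserves duals coherently, so $D(A^{*})$ is a dual of $D(A)$ and the compact structure of $\cat{C}\pure$ may be used to manipulate any morphism whose domain and codomain lie in the image of $D$; in particular this applies to the grounds and to all reduced morphisms.

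\medskip
\noindent\emph{Closure lemma.} The reduced morphisms include every $D(f)$ (take $X = I$ and discard the trivial ground on $D(I)$, using the unit equation in \eqref{eq:environment}), every ground $\tinyground \colon D(A) \to I$ (take $f = \id[A]$), and every identity, and they are closed under composition, $\otimes$, and $(-)^{\dagger}$; moreover, the Kraus datum of a composite, tensor, or dagger of reduced morphisms can be taken to be precisely the one the $\cpm$ construction assigns. For composition and tensor, this is a diagrammatic rewrite: one slides a ground through $D$ of a morphism and merges it with a second ground by the two equations in \eqref{eq:environment} that make $\tinyground$ compatible with $\otimes$ and $I$, after which the picture is literally the defining diagram of the $\cpm$ composite or tensor. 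For the dagger, one must additionally bring the dagger of a ground --- a morphism $I \to D(X)$ --- back into reduced form by bending it around the compact structure, and this is exactly where the third equation in \eqref{eq:environment}, relating $\tinyground$ to the cup/cap, is used. I expect this lemma, and especially its dagger clause, to be the main obstacle; the rest is bookkeeping.

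\medskip
Granting the lemma, the first bullet goes as follows. For ``\eqref{eq:purification} $\Rightarrow$ initial'': if every morphism of $\cat{D}$ is reduced, then every object is some $D(A)$, and one defines $F \colon \cat{D} \to \cat{D}'$ by $F(D(A)) = D'(A)$ on objects (unproblematic when $D$ is injective on objects, as for $\cpm[\cat{C}\pure]$) and by $F(\tinyground \circ D(f)) = \tinygroundtwo \circ D'(f)$ on morphisms. This is well defined on morphisms since, reading \eqref{eq:unique} first in $\cat{D}$ and then in $\cat{D}'$, two reduced morphisms agree in $\cat{D}$ iff the corresponding $\cpm$-diagrams agree in $\cat{C}\pure$ iff the corresponding reduced morphisms agree in $\cat{D}'$. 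By the closure lemma, composites, tensors, and daggers are computed in $\cat{D}$ and in $\cat{D}'$ from the same ($\cpm$) Kraus data, so $F$ is a monoidal dagger functor; and writing $D(f)$ and the grounds themselves in reduced form (with $X = I$, respectively $f = \id[A]$) gives $F \circ D = D'$ and $F(\tinyground) = \tinygroundtwo$. Uniqueness is forced: any admissible $G$ must send $\tinyground \circ D(f)$ to $G(\tinyground) \circ G(D(f)) = \tinygroundtwo \circ D'(f)$, and every morphism of $\cat{D}$ has that shape.

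For the second bullet, equip $\cpm[\cat{C}\pure]$ with the ground $\tinyground \colon P(A) \to I$ given, as a morphism of $\cpm[\cat{C}\pure]$, by the cap $A^{*} \otimes A \to I$ of $\cat{C}\pure$ --- equivalently, with Kraus datum $\id[A] \colon A \to A \otimes I$ --- so that checking \eqref{eq:environment} is the standard verification that this is an environment structure (naturality and cocommutativity of the cap). Then \eqref{eq:unique} holds tautologically: unwinding the definitions, $\tinyground \circ P(f)$ \emph{is} the $\cpm$-morphism with Kraus datum $f$, because contracting the two $X$-legs of $f_{*} \otimes f$ by the cap reproduces the bridging wire in the definition of $\cpm[\cat{C}\pure]$, and equality of morphisms in $\cpm[\cat{C}\pure]$ is equality of the underlying morphisms of $\cat{C}\pure$. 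The same unwinding shows every morphism of $\cpm[\cat{C}\pure]$ is reduced, so \eqref{eq:purification} holds and the first bullet makes $(\cpm[\cat{C}\pure], P, \tinyground)$ initial. Finally this settles the converse implication ``initial $\Rightarrow$ \eqref{eq:purification}'' of the first bullet: an initial $(\cat{D}, D, \tinyground)$ is isomorphic, in the category above, to $(\cpm[\cat{C}\pure], P, \tinyground)$, and the conditions $F \circ D = P$ and $F(\tinyground) = \tinyground$ on the comparison isomorphism $F$ transport ``every morphism is reduced'' back from $\cpm[\cat{C}\pure]$ to $\cat{D}$.
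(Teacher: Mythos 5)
Your proposal is correct and follows essentially the same route as the paper: define $F$ on reduced morphisms by $\tinyground\circ D(f)\mapsto\tinygroundtwo\circ D'(f)$, use \eqref{eq:unique} for well-definedness, verify functoriality/monoidality/dagger-preservation by the same diagrammatic rewrites (your ``closure lemma'' just packages those checks), take the cap as the ground on $\cpm[\cat{C}\pure]$, and transport \eqref{eq:purification} along the comparison isomorphism for the converse. The only difference is presentational, and you even make explicit the ``initial $\Rightarrow$ \eqref{eq:purification}'' direction that the paper leaves implicit.
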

\noindent
Notice that~\eqref{eq:purification} implies every object of $\cat{D}$ equals $D(A)$ for some $A$ in $\cat{C}\pure$.
\begin{proof}
  We must show that for any $(\cat{D}, D,\tinyground)$ satisfying~\eqref{eq:environment}, \eqref{eq:unique}, and~\eqref{eq:purification}, and any $(\cat{D'}, D',\tinygroundtwo)$ satisfying~\eqref{eq:environment} and~\eqref{eq:unique}, there is a unique monoidal dagger functor $F \colon \cat{D} \to \cat{D'}$ such that $F \circ D=D'$ and $F(\tinyground)=\tinygroundtwo$.

  Every object of $\cat{D}$ is $D(A)$ for some $A$ in $\cat{C}\pure$. Since we need $F \circ D=D'$ we must have $F$ send $D(A)$ to $D'(A)$. On morphisms, $F$ must send $D(f)$ to $D'(f)$ and $\tinyground$ to $\tinygroundtwo$. Therefore we define
  \[
    F \left(
    \begin{pic}[scale=.75]
      \node[morphism] (f) at (0,.5) {$D(f)$};
      \draw (f.south) to +(0,-.5);
      \draw ([xshift=3mm]f.north) to +(0,.5);
      \draw ([xshift=-3mm]f.north) to +(0,.2)node[ground,scale=.75]{};
    \end{pic}
    \right)
    = 
    \begin{pic}[scale=.75]
      \node[morphism] (f) at (0,.5) {$D'(f)$};
      \draw (f.south) to +(0,-.5);
      \draw ([xshift=3mm]f.north) to +(0,.5);
      \draw ([xshift=-3mm]f.north) to +(0,.2)node[groundtwo,scale=.75]{};
    \end{pic}.
  \]
  By~\eqref{eq:purification}, this completely fixes $F$, so it suffices to verify that $F$ is indeed a well-defined monoidal dagger functor. 
  Well-definedness follows from~\eqref{eq:unique}:
  \begin{align*}
    \begin{pic}[scale=.75]
      \node[morphism] (f) at (0,.5) {$D(f)$};
      \draw (f.south) to +(0,-.5);
      \draw ([xshift=3mm]f.north) to +(0,.5);
      \draw ([xshift=-3mm]f.north) to +(0,.2)node[ground,scale=.75]{};
    \end{pic}
    =
    \begin{pic}[scale=.75]
      \node[morphism] (f) at (0,.5) {$D(g)$};
      \draw (f.south) to +(0,-.5);
      \draw ([xshift=3mm]f.north) to +(0,.5);
      \draw ([xshift=-3mm]f.north) to +(0,.2)node[ground,scale=.75]{};
    \end{pic}
    \text{ in }\cat{D}
    \quad & \iff \quad
    \begin{pic}[scale=.75]
      \node[morphism,vflip] (l) at (-.5,0) {$f$};
      \node[morphism] (r) at (.5,0) {$f$};
      \draw[diredge] (l.south) to +(0,-.5);
      \draw[diredge=<] (r.south) to +(0,-.5);
      \draw[diredge=<] (l.north west) to +(0,.5);
      \draw[diredge] (r.north east) to +(0,.5);
      \draw[diredge] (r.north west) to[out=90,in=90] (l.north east);
    \end{pic}
    =
    \begin{pic}[scale=.75]
      \node[morphism,vflip] (l) at (-.5,0) {$g$};
      \node[morphism] (r) at (.5,0) {$g$};
      \draw[diredge] (l.south) to +(0,-.5);
      \draw[diredge=<] (r.south) to +(0,-.5);
      \draw[diredge=<] (l.north west) to +(0,.5);
      \draw[diredge] (r.north east) to +(0,.5);
      \draw[diredge] (r.north west) to[out=90,in=90] (l.north east);
    \end{pic}
    \text{ in }\cat{C}\pure \\
    & \iff \quad
    \begin{pic}[scale=.75]
      \node[morphism] (f) at (0,.5) {$D'(f)$};
      \draw (f.south) to +(0,-.5);
      \draw ([xshift=3mm]f.north) to +(0,.5);
      \draw ([xshift=-3mm]f.north) to +(0,.2)node[groundtwo,scale=.75]{};
    \end{pic}
    =
    \begin{pic}[scale=.75]
      \node[morphism] (f) at (0,.5) {$D'(g)$};
      \draw (f.south) to +(0,-.5);
      \draw ([xshift=3mm]f.north) to +(0,.5);
      \draw ([xshift=-3mm]f.north) to +(0,.2)node[groundtwo,scale=.75]{};
    \end{pic}
    \text{ in }\cat{D'}
  \end{align*}   
  Functoriality of $F$ is established by showing that it preserves identities:
  \[
    \begin{pic}
    \draw[diredge] (0,0) node[below] {$D(A)$} to (0,1);
    \end{pic}
    \quad = \quad
    \begin{pic}
    \node[morphism] (1) at (0,0.5) {$D(\id[A])$};
    \draw[diredge] (0,0) node[below] {$D(A)$} to (1.south);
    \draw[diredge] (1.north) to (0,1);
    \end{pic}
    \quad \stackrel{F}{\longmapsto} \quad
    \begin{pic}
    \node[morphism] (1) at (0,0.5) {$D'(\id[A])$};
    \draw[diredge] (0,0) node[below] {$D'(A)$} to (1.south);
    \draw[diredge] (1.north) to (0,1);
    \end{pic}
    \quad = \quad
    \begin{pic}
    \draw[diredge] (0,0) node[below] {$D'(A)$} to (0,1);
    \end{pic}
  \]
  and that it preserves composition:
  \[
    \begin{pic}
      \node[morphism] (1) at (0,0.5) {$D(f)$};
      \node[morphism] (2) at ([xshift=-5,yshift=25]1.north east) {$D(g)$};
      \draw[diredge] (0,0) to (1.south);
      \draw[diredge] ([xshift=-5]1.north east) to (2.south);
      \draw[diredge] ([xshift=-5]2.north east) to +(0,0.6);    
      \draw ([xshift=5]1.north west) to +(0,0.2) node[ground] {};    
      \draw ([xshift=5]2.north west) to +(0,0.2) node[ground] {};    
    \end{pic}
    \quad = \quad
    \begin{pic}
      \node[morphism] (1) at (0,0.5) {$D(f)$};
      \node[morphism] (2) at ([xshift=-5,yshift=25]1.north east) {$D(g)$};
      \node[ground] (3) at  ([yshift=5]2.north west) {};
      \draw[diredge] (0,0) to (1.south);
      \draw[diredge] ([xshift=-5]1.north east) to (2.south);
      \draw[diredge] ([xshift=-5]2.north east) to +(0,0.6);    
      \draw ([xshift=5]1.north west) to[out=90,in=-90] ([xshift=-3]3);    
      \draw ([xshift=5]2.north west) to[out=90,in=-90] ([xshift=3]3);    
    \end{pic}
    \quad \stackrel{F}{\longmapsto} \quad
    \begin{pic}
      \node[morphism] (1) at (0,0.5) {$D'(f)$};
      \node[morphism] (2) at ([xshift=-5,yshift=25]1.north east) {$D'(g)$};
      \node[groundtwo] (3) at  ([yshift=5]2.north west) {};
      \draw[diredge] (0,0) to (1.south);
      \draw[diredge] ([xshift=-5]1.north east) to (2.south);
      \draw[diredge] ([xshift=-5]2.north east) to +(0,0.6);    
      \draw ([xshift=5]1.north west) to[out=90,in=-90] ([xshift=-3]3);    
      \draw ([xshift=5]2.north west) to[out=90,in=-90] ([xshift=3]3);    
    \end{pic}
    \quad = \quad
    \begin{pic}
      \node[morphism] (1) at (0,0.5) {$D'(f)$};
      \node[morphism] (2) at ([xshift=-5,yshift=25]1.north east) {$D'(g)$};
      \draw[diredge] (0,0) to (1.south);
      \draw[diredge] ([xshift=-5]1.north east) to (2.south);
      \draw[diredge] ([xshift=-5]2.north east) to +(0,0.6);    
      \draw ([xshift=5]1.north west) to +(0,0.2) node[groundtwo] {};    
      \draw ([xshift=5]2.north west) to +(0,0.2) node[groundtwo] {};    
    \end{pic}
  \]
  The functor $F$ is monoidal:
  \[
    \begin{pic}
      \node[morphism] (1) at (0,.5) {$D(f)$};
      \draw[diredge=<] (1.south) to +(0,-.5);
      \draw[diredge] ([xshift=3mm]1.north) to +(0,.5);
      \draw ([xshift=-3mm]1.north) to +(0,.2) node[ground, scale=.7]{};
      \node[morphism] (2) at (1.5,.5) {$D(g)$};
      \draw[diredge=<] (2.south) to +(0,-.5);
      \draw[diredge] ([xshift=3mm]2.north) to +(0,.5);
      \draw ([xshift=-3mm]2.north) to +(0,.2) node[ground, scale=.7]{};
    \end{pic}
    \quad = \quad
    \begin{pic}
      \node[morphism] (1) at (0,.5) {$D(f)$};
      \draw[diredge=<] (1.south) to +(0,-.5);
      \draw[diredge] ([xshift=3mm]1.north) to +(0,.5);
      \node[morphism] (2) at (1.5,.5) {$D(g)$};
      \draw[diredge=<] (2.south) to +(0,-.5);
      \draw[diredge] ([xshift=3mm]2.north) to +(0,.5);
      \node[ground, scale=.7] (g) at ([xshift=-17mm,yshift=2mm]2.north) {};
      \draw ([xshift=-3mm]1.north) to[out=90,in=-90,looseness=.5] ([xshift=-1mm]g.south);
      \draw ([xshift=-3mm]2.north) to[out=90,in=-90,looseness=.5] ([xshift=1mm]g.south);
    \end{pic}
    \quad \stackrel{F}{\longmapsto} \quad
    \begin{pic}
      \node[morphism] (1) at (0,.5) {$D'(f)$};
      \draw[diredge=<] (1.south) to +(0,-.5);
      \draw[diredge] ([xshift=3mm]1.north) to +(0,.5);
      \node[morphism] (2) at (1.5,.5) {$D'(g)$};
      \draw[diredge=<] (2.south) to +(0,-.5);
      \draw[diredge] ([xshift=3mm]2.north) to +(0,.5);
      \node[groundtwo, scale=.7] (g) at ([xshift=-17mm,yshift=3mm]2.north) {};
      \draw ([xshift=-3mm]1.north) to[out=90,in=-90,looseness=.5] ([xshift=-1mm]g.south);
      \draw ([xshift=-3mm]2.north) to[out=90,in=-90,looseness=.5] ([xshift=1mm]g.south);
    \end{pic}
    \quad = \quad
    \begin{pic}
      \node[morphism] (1) at (0,.5) {$D'(f)$};
      \draw[diredge=<] (1.south) to +(0,-.5);
      \draw[diredge] ([xshift=3mm]1.north) to +(0,.5);
      \draw ([xshift=-3mm]1.north) to +(0,.2) node[groundtwo, scale=.7]{};
      \node[morphism] (2) at (1.5,.5) {$D'(g)$};
      \draw[diredge=<] (2.south) to +(0,-.5);
      \draw[diredge] ([xshift=3mm]2.north) to +(0,.5);
      \draw ([xshift=-3mm]2.north) to +(0,.2) node[groundtwo, scale=.7]{};
    \end{pic}
  \]
  Finally, $F$ preserves daggers:
  \[
    \begin{pic}
      \node[morphism,hflip] (1) at (0,0) {$D(f)$};
      \draw[diredge=<] (1.north) to +(0,.5);
      \draw[diredge] ([xshift=3mm]1.south) to +(0,-.5);
      \draw ([xshift=-3mm]1.south) to +(0,-.2) node[ground,scale=.7,hflip]{};
    \end{pic}
    \quad = \quad
    \begin{pic}
      \node[morphism,hflip] (1) at (0,0) {$D(f)$};
      \draw[diredge=<] (1.north) to +(0,.5);
      \draw[diredge] ([xshift=3mm]1.south) to +(0,-.5);
      \draw ([xshift=-3mm]1.south) to[out=-90,in=-90] +(-.5,0) to +(0,.5) node[ground,scale=.7]{};
    \end{pic}
    \quad \stackrel{F}{\longmapsto}
    \begin{pic}
      \node[morphism,hflip] (1) at (0,0) {$D'(f)$};
      \draw[diredge=<] (1.north) to +(0,.5);
      \draw[diredge] ([xshift=3mm]1.south) to +(0,-.5);
      \draw ([xshift=-3mm]1.south) to[out=-90,in=-90] +(-.5,0) to +(0,.5) node[groundtwo,scale=.7]{};
    \end{pic}
    \quad = \quad
    \begin{pic}
      \node[morphism,hflip] (1) at (0,0) {$D'(f)$};
      \draw[diredge=<] (1.north) to +(0,.5);
      \draw[diredge] ([xshift=3mm]1.south) to +(0,-.5);
      \draw ([xshift=-3mm]1.south) to +(0,-.2) node[groundtwo,rotate=180,scale=.7]{};
    \end{pic}
  \] 
  This completes the first part of the proof. It remains to find $\tinyground$ making $(\cpm[\cat{C}\pure],P,\tinyground)$ initial. Taking $\tinyground\colon A^*\otimes A\to I$ to be $\tinycap$ then $(\cpm[\cat{C}\pure],P,\tinyground)$ satisfies \eqref{eq:environment}, \eqref{eq:unique} and \eqref{eq:purification} immediately.
\end{proof}

\begin{definition}
  Let $\cat{C}$ be a compact dagger category, and $\cat{C}\pure$ be a compact dagger subcategory.
  An \emph{environment structure} consists of a morphism $\tinyground \colon A \to I$ for each object $A$ in $\cat{C}\pure$ satisfying
  \begin{align}
    \begin{pic}
      \node[ground] (1) at (0,.5) {};
      \draw[pure]  (0,0) node[below] {$A\otimes B$} to (1.south) ;
    \end{pic}
    =
    \begin{pic}
      \node[ground] (1) at (0,.5) {};
      \draw[pure]  (0,0) node[below] {$A$} to (1.south);
      \node[ground] (2) at (0.6,.5) {};
      \draw[pure]  (0.6,0) node[below] {$B$} to (2.south);
    \end{pic}
    \qquad\qquad
    \begin{pic}
      \node[ground] (1) at (0,.5) {};
      \draw[pure]  (0,0) node[below] {$I$} to (1.south) ;
    \end{pic}~
    &=
    \qquad
    \qquad
    \begin{pic}
      \node[ground] (1) at (0,.5) {};
      \draw (0,0) node[below] {$A$} to (1.south) ;
    \end{pic}
    =
    \begin{pic}
      \draw (0,0) node[below] {$A$} to (0,.5);
      \draw[diredge] (0,.5) to[out=90,in=90,looseness=1.5] (.6,.5) node[ground,hflip]{};
    \end{pic}
  \label{eq:environment2}
  \\
    \begin{pic}[scale=.75]
      \node[morphism,vflip] (l) at (-.5,0) {$f$};
      \node[morphism] (r) at (.5,0) {$f$};
      \draw[diredge] (l.south) to +(0,-.5);
      \draw[diredge=<] (r.south) to +(0,-.5);
      \draw[diredge=<] (l.north west) to +(0,.5);
      \draw[diredge] (r.north east) to +(0,.5);
      \draw[diredge] (r.north west) to[out=90,in=90] (l.north east);
    \end{pic}
    =
    \begin{pic}[scale=.75]
      \node[morphism,vflip] (l) at (-.5,0) {$g$};
      \node[morphism] (r) at (.5,0) {$g$};
      \draw[diredge] (l.south) to +(0,-.5);
      \draw[diredge=<] (r.south) to +(0,-.5);
      \draw[diredge=<] (l.north west) to +(0,.5);
      \draw[diredge] (r.north east) to +(0,.5);
      \draw[diredge] (r.north west) to[out=90,in=90] (l.north east);
    \end{pic}
    \quad &\iff \quad
    \begin{pic}[scale=.75]
      \node[morphism] (f) at (0,.5) {$f$};
      \draw (f.south) to +(0,-.5);
      \draw (f.north east) to +(0,.5);
      \draw (f.north west) to +(0,.2)node[ground,scale=.75]{};
    \end{pic}
    =
    \begin{pic}[scale=.75]
      \node[morphism] (f) at (0,.5) {$g$};
      \draw (f.south) to +(0,-.5);
      \draw (f.north east) to +(0,.5);
      \draw (f.north west) to +(0,.2)node[ground,scale=.75]{};
    \end{pic}
  \label{eq:unique2}
  \end{align}
  for all $f,g\in\cat{C}\pure$. 
  An \emph{environment structure with purification} is an environment structure such that every morphism of $\cat{C}$ is of the form
  \[
    \begin{pic}[scale=.75]
      \node[morphism] (f) at (0,.5) {$f$};
      \draw (f.south) to +(0,-.5);
      \draw (f.north east) to +(0,.5);
      \draw (f.north west) to +(0,.2)node[ground,scale=.75]{};
    \end{pic}
  \]
  for some $f$ in $\cat C\pure$. 
\end{definition}

Note that environment structures do not require $\cat{C}\pure$ to be \emph{wide} in $\cat{C}$, \textit{i.e.}\ containing every object, unlike~\cite{coeckeperdrix:environment,coeckeheunen:infdim}. However, for environment structures with purification, $\cat{C}\pure$ is necessarily wide in $\cat{C}$.

From the universal property we now deduce that environment structures axiomatize the $\cpm$ construction.

\begin{corollary}
  If a compact dagger category $\cat{C}$ comes with a compact dagger subcategory $\cat{C}\pure$ and an environment structure with purification, there is an isomorphism $\cpm[\cat{C}\pure] \simeq \cat{C}$ of compact dagger categories.
\end{corollary}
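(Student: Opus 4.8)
The plan is to derive this directly from the universal property in Theorem~\ref{thm:cpmuniversal}. Let $D\colon\cat{C}\pure\to\cat{C}$ be the inclusion functor. Since $\cat{C}\pure$ is a compact dagger subcategory, $D$ is a (strict) monoidal dagger functor, and because $D(A)=A$ the given environment structure is exactly a family of morphisms $\tinyground\colon D(A)\to I$. I would then observe that the triple $(\cat{C},D,\tinyground)$ is an object of the category considered in Theorem~\ref{thm:cpmuniversal}: indeed, axioms~\eqref{eq:environment} and~\eqref{eq:unique} for $(\cat{C},D,\tinyground)$ are literally the defining equations~\eqref{eq:environment2} and~\eqref{eq:unique2} of an environment structure, read through the identification $D(A)=A$.

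Next, since we are given an environment structure \emph{with purification}, condition~\eqref{eq:purification} also holds for $(\cat{C},D,\tinyground)$ verbatim; as noted just after the theorem statement, this moreover forces every object of $\cat{C}$ to be of the form $D(A)$, so that $\cat{C}\pure$ is automatically wide in $\cat{C}$. By the first bullet of Theorem~\ref{thm:cpmuniversal}, $(\cat{C},D,\tinyground)$ is therefore initial in that category. By the second bullet, $(\cpm[\cat{C}\pure],P,\tinyground)$ is initial as well (for the choice $\tinyground=\tinycap$). Since initial objects are connected by a unique isomorphism, and the isomorphism is by construction a morphism in the category of Theorem~\ref{thm:cpmuniversal}, we obtain a monoidal dagger functor $F\colon\cpm[\cat{C}\pure]\to\cat{C}$ with a two-sided inverse that is again a monoidal dagger functor. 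Such an $F$ is bijective on objects and on morphisms, hence an isomorphism of categories, and it preserves tensor and dagger (up to the suppressed coherence data $P_0,P_2$), so it is an isomorphism of compact dagger categories—a monoidal dagger isomorphism automatically carries a chosen dual of $A$ to a dual of $F(A)$.

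Honestly there is no serious obstacle here: the corollary is a near-immediate consequence of the universal property, and the only points needing a line of care are the bookkeeping around the coherence isomorphisms $P_0,P_2$ that the paper suppresses, and the remark that an isomorphism in the comma-type category of Theorem~\ref{thm:cpmuniversal} yields an isomorphism, not merely an equivalence, of the underlying categories—which is clear because the inverse is exhibited explicitly as a morphism of the same category.
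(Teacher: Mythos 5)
Your argument is exactly the paper's proof: apply Theorem~\ref{thm:cpmuniversal} to the inclusion $\cat{C}\pure \hookrightarrow \cat{C}$, note that both $(\cat{C},D,\tinyground)$ and $\cpm[\cat{C}\pure]$ are initial in the relevant category, and conclude they are isomorphic. The extra remarks about wideness and the coherence data are fine but not needed beyond what the theorem already supplies.
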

\begin{proof}
  Applying Theorem~\ref{thm:cpmuniversal} to the inclusion $\cat{C}\pure \hookrightarrow \cat{C}$ shows that $\cpm[\cat{C}\pure]$ and $\cat{C}$ are both initial, and hence isomorphic.
\end{proof}

\section{Decoherence structures}\label{sec:cps}

This section concerns the $\cps$ construction~\cite{coeckeheunenkissinger:cpstar}. After recalling the construction itself, we characterize it in a universal way. We then introduce decoherence structures and, by using the universal property of $\cps$, prove that decoherence structures axiomatize the $\cps$ construction.

The following definition is not quite the official definition of the $\cps$ construction~\cite{coeckeheunenkissinger:cpstar}, but it is equivalent to it~\cite[Lemma~1.2]{heunenkissingerselinger:idempotents}.

\begin{definition}
  If $\cat{C}\pure$ is a compact dagger category, $\cps[\cat{C}\pure]$ is the compact dagger category given by:
  \begin{itemize}
  \item objects of $\cps[\cat{C}\pure]$ are \emph{special dagger Frobenius structures} in $\cat{C}\pure$: objects $A$ in $\cat{C}\pure$ with morphisms $\tinymult \colon A \otimes A \to A$ and $\tinyunit \colon I \to A$ satisfying: 
  \[ 
    \begin{pic}[scale=.4]
      \node[dot] (t) at (0,1) {};
      \node[dot] (b) at (1,0) {};
      \draw (t) to +(0,1);
      \draw (t) to[out=0,in=90] (b);
      \draw (t) to[out=180,in=90] (-1,0) to (-1,-1);
      \draw (b) to[out=180,in=90] (0,-1);
      \draw (b) to[out=0,in=90] (2,-1);
    \end{pic}
    =
    \begin{pic}[yscale=.4,xscale=-.4]
      \node[dot] (t) at (0,1) {};
      \node[dot] (b) at (1,0) {};
      \draw (t) to +(0,1);
      \draw (t) to[out=0,in=90] (b);
      \draw (t) to[out=180,in=90] (-1,0) to (-1,-1);
      \draw (b) to[out=180,in=90] (0,-1);
      \draw (b) to[out=0,in=90] (2,-1);
    \end{pic}
  \qquad
    \begin{pic}[scale=.4] 
      \node[dot] (d) {};
      \draw (d) to +(0,1);
      \draw (d) to[out=0,in=90] +(1,-1) to +(0,-1);
      \draw (d) to[out=180,in=90] +(-1,-1) node[dot] {};
    \end{pic}
    =
    \begin{pic}[scale=.4]
      \draw (0,0) to (0,3);
    \end{pic}
    =
    \begin{pic}[yscale=.4,xscale=-.4]
      \node[dot] (d) {};
      \draw (d) to +(0,1);
      \draw (d) to[out=0,in=90] +(1,-1) to +(0,-1);
      \draw (d) to[out=180,in=90] +(-1,-1) node[dot] {};
    \end{pic}
  \qquad
    \begin{pic}
      \node[dot] (t) at (0,.6) {};
      \node[dot] (b) at (0,0) {};
      \draw (b.south) to +(0,-0.3);
      \draw (t.north) to +(0,+0.3);
      \draw (b.east) to[out=30,in=-90] +(.15,.3);
      \draw (t.east) to[out=-30,in=90] +(.15,-.3);
      \draw (b.west) to[out=-210,in=-90] +(-.15,.3);
      \draw (t.west) to[out=210,in=90] +(-.15,-.3);
    \end{pic}
    =
    \begin{pic}
      \draw(0,0) to (0,1.5);
    \end{pic}
  \qquad
    \begin{pic}[scale =.6]
      \draw (0,0) to (0,1) to[out=90,in=180] (.5,1.5) to (.5,2);
      \draw (.5,1.5) to[out=0,in=90] (1,1) to[out=-90,in=180] (1.5,.5) to (1.5,0);
      \draw (1.5,.5) to[out=0,in=-90] (2,1) to (2,2);
      \node[dot] at (.5,1.5) {};
      \node[dot] at (1.5,.5) {};
    \end{pic}
    =
    \begin{pic}[scale = 0.6, xscale=-1]
      \draw (0,0) to (0,1) to[out=90,in=180] (.5,1.5) to (.5,2);
      \draw (.5,1.5) to[out=0,in=90] (1,1) to[out=-90,in=180] (1.5,.5) to (1.5,0);
      \draw (1.5,.5) to[out=0,in=-90] (2,1) to (2,2);
      \node[dot] at (.5,1.5) {};
      \node[dot] at (1.5,.5) {};
    \end{pic}
  \]
  we will write 
  $\begin{pic}[scale=.66]
    \node[dot] (d) at (0,0) {};
    \draw[diredge=<] (d) to +(0,-.5);
    \draw[diredge] (d) to[out=0,in=-90] +(.5,.5);
    \draw[diredge=<] (d) to[out=180,in=-90] +(-.5,.5);
  \end{pic}$
  for
  $\begin{pic}[scale=.66]
    \node[dot] (d) at (0,0) {};
    \draw[diredge] (d) to +(0,.5);
    \draw[diredge=<] (d) to[out=0,in=90] +(.4,-.5);
    \draw[diredge=<] (d) to[out=180,in=90] +(-.33,-.33) to[out=-90,in=-90] +(-.33,0) to +(0,.8);
  \end{pic}\,$,
  and
  $\begin{pic}[scale=.66]
    \node[dot] (d) at (0,0) {};
    \draw[diredge] (d) to +(0,.5);
    \draw[diredge=<] (d) to[out=0,in=90] +(.5,-.5);
    \draw[diredge] (d) to[out=180,in=90] +(-.5,-.5);
  \end{pic}$
  for
  $\begin{pic}[scale=.66]
    \node[dot] (d) at (0,0) {};
    \draw[diredge=<] (d) to +(0,-.5);
    \draw[diredge] (d) to[out=0,in=90] +(.4,.5);
    \draw[diredge] (d) to[out=180,in=90] +(-.33,.33) to[out=90,in=90] +(-.33,0) to +(0,-.8);
  \end{pic}\,$;
  \item morphisms $(A,\tinymult,\tinyunit) \to (B,\tinymult[whitedot],\tinyunit[whitedot])$ of $\cps[\cat{C}\pure]$ are morphisms in $\cat{C}\pure$ of the form
    \[
    \begin{pic}[scale=.8]
      \node[morphism,vflip] (l) at (-.75,0) {$f$};
      \node[morphism] (r) at (.75,0) {$f$};
      \draw[diredge] (r.north west) to[out=90,in=90] (l.north east);
      \node[dot] (b) at (0,-.75) {};
      \node[whitedot] (t) at (0,1) {};
      \draw[diredge=<] (b) to +(0,-.5);
      \draw[diredge] (l.south) to[out=-90,in=180] (b.west);
      \draw[diredge] (b.east) to[out=0,in=-90] (r.south);
      \draw[diredge] (r.north east) to[out=90,in=0] (t.east);
      \draw[diredge] (t.west) to[out=180,in=90] (l.north west);
      \draw[diredge] (t.north) to +(0,.5);
    \end{pic}
    \]
    for some object $X$ and morphism $f \colon A \to X \otimes B$ in $\cat{C}\pure$;
  \item identity morphisms and composition in $\cps[\cat{C}\pure]$ are as in $\cat{C}\pure$;
  \item the tensor unit $I$ in $\cps[\cat{C}\pure]$ is the trivial Frobenius structure: tensor unit $I$ in $\cat{C}\pure$, equipped with its coherence isomorphisms $\tinymult \colon I \otimes I \to I$ and $\tinyunit = \id[I] \colon I \to I$;
  \item the tensor product of objects $(A,\tinymult[whitedot],\tinyunit[whitedot])$ and $(B,\tinymult,\tinyunit)$ is $(A \otimes B, \tinymult[whitedot] \hspace*{-1.5mm} \tinymult, \tinyunit[whitedot] \tinyunit)$;
  \item the tensor product of morphisms in $\cps[\cat{C}\pure]$ is inherited from $\cat{C}\pure$;
  \item the dagger in $\cps[\cat{C}\pure]$ is inherited from $\cat{C}\pure$.
  \end{itemize}
\end{definition}

Recall that the \emph{positive-dimensionality} condition~\cite{coeckeheunenkissinger:cpstar} requires precisely that for each object $A$ in $\cat{C}\pure$ there is a positive scalar $z$ such that:
\[
  \begin{pic}
       \draw[diredge] (0,0) to (0,0.5) node[left] {A} to (0,1);
       \node[box] (1) at (0.3,0.3) {z};
       \node[box] (2) at (0.3,0.7) {z};
       \draw[diredge] (1.4,0.5) to[out=-90,in=0] (1.15,0.25) to[out=180,in=-90] (0.9,0.5) node[left] {A} to[out=90,in=180] (1.15,0.75) to[out=0,in=90] (1.4,0.5);
  \end{pic}
  \quad=\quad
  \begin{pic}
       \draw[diredge] (0,0) to (0,0.5) node[right] {A} to (0,1);
  \end{pic}
\]
If $\cat C\pure$ is positive-dimensional, there is a canonical monoidal dagger functor $Q \colon \cat C\pure\to\cps[\cat C\pure]$, defined by $Q(A)=(A\otimes A^*,\hspace{1mm}\tinyz\hspace{0.75mm}\tinypants,\hspace{1mm}\tinyz\hspace{2.5mm}\tinyring\tinycup)$ on objects and $Q(f) = f_* \otimes f$ on morphisms~\cite{coeckeheunenkissinger:cpstar}. In fact, $Q$ factors through the functor $P \colon \cat{C}\pure \to \cpm[\cat{C}\pure]$. We will suppress the coherence morphisms of $Q$.

We are now ready to prove our second main result, that characterizes $\cps[\cat{C}\pure]$ (up to monoidal dagger isomorphism) by a universal property.

\begin{theorem}\label{thm:cpsuniversal}
  For a positive-dimensional compact dagger category $\cat C\pure$, consider the following category. Objects $(\cat D, D,\tinyground,\tinyfrob)$ are categories $\cat D$ equipped with a monoidal dagger functor $D:\cat C\pure \rightarrow \cat D$, a morphism $\tinyground:D(A)\rightarrow I$ for each object $A$ of $\cat C\pure$, and an object $F_\mathcal{A}$ and a morphism $\tinyfrob \colon D(A) \to F_\mathcal{A}$ in $\cat{C}$ for each special dagger Frobenius structure $\mathcal{A}=(A,\tinymult,\tinyunit)$ in $\cat{C}\pure$, satisfying \eqref{eq:environment} and \eqref{eq:unique} as well as:
   \begin{align}
    \begin{pic}
      \node[greytriangle] (1) at (0,.5) {};
      \draw[mixed] (1) to (0,1) node[above] {$ F_{\mathcal{A} \otimes \mathcal{B}}$} ;
      \draw[pure] (1) to (0,0) node[below] {$G(A\otimes B)$};
    \end{pic}
    & =
    \begin{pic}
      \node[whitetriangle] (1) at (0,.5) {};
      \draw[mixed] (1) to (0,1) node[above] {$ F_{\mathcal{A}}$} ;
      \draw[pure] (0,0) node[below] {$D(A)$} to (1) ;
      \node[blacktriangle] (2) at (.5,.5) {};
      \draw[mixed] (2) to (.5,1) node[above] {$ F_{\mathcal{B}}$} ;
      \draw[pure] (.5,0) node[below] {$D(B)$} to (2) ;
    \end{pic}
    &
    \begin{pic}
      \node[whitetriangle] (1) at (0,.5) {};
      \draw[mixed] (1) to (0,1) node[above] {$ F_{\mathcal{I}}$} ; ;
      \draw[pure] (1) to (0,0) node[below] {$D(I)$};
    \end{pic}
    & =
    \qquad
    \label{eq:decoherence}
    \\
    \begin{pic}
      \node[triangle] (1) at (0,.5) {};
      \node[invertedtriangle] (2) at (0,.9) {};
      \draw[mixed,diredge] (1) to (2) ;
      \draw[pure,diredge] (2) to (0,1.4) ;
      \draw[pure,diredge] (0,0) to (1) ;
    \end{pic}
    & =
    \begin{pic}
      \node[dot] (1) at (0,.7) {};
      \node[ground,scale=.75] (2) at (-0.6,1) {};
      \draw[pure,diredge] (1) to (0,1.4) ;
      \draw[pure,diredge] (0,0) to (1) ;
      \draw[pure,diredge] (1) to[out=180,in=-90,looseness=1] (2.south);
    \end{pic}
    & 
    \begin{pic}
      \node[invertedtriangle] (1) at (0,.5) {};
      \node[triangle] (2) at (0,.9) {};
      \draw[mixed,diredge] (2) to (0,1.4) ;
      \draw[mixed,diredge] (0,0) to (1) ;
      \draw[pure,diredge] (1) to (2) ;
    \end{pic}
    & =
    \begin{pic}
     \draw[mixed,diredge] (0,0) to (0,1.4) ;
    \end{pic}
   \label{eq:idempotent}
  \end{align}
  Notice that for~\eqref{eq:decoherence} to hold we must have $F_{\mathcal{A}\otimes\mathcal{B}}=F_  {\mathcal{A}}\otimes F_{\mathcal{B}}$ and $F_{\mathcal{I}}=I$, and that~\eqref{eq:idempotent} abuses the notation $\tinymult$ to mean $D\left(\tinymult\right)$. Morphisms $(\cat D, D,\tinyground,\tinyfrob) \to (\cat D', D',\tinygroundtwo,\tinyfrobtwo)$ are monoidal dagger functors $F\colon \cat D \to \cat D'$ such that $F\circ D=D'$, $F(\tinyground)=\tinygroundtwo$, and $F(\tinyfrob)=\tinyfrobtwo$. Then:
  \begin{itemize}
  \item $(\cat{D},D,\tinyground,\tinyfrob)$ is initial in this category if and only if
   \begin{align}
    \text{every morphism of $\cat{D}$ is of the form }
    \begin{pic}
     \node[pure, morphism] (1) at (0,.75) {$D(f)$};
     \node[greytriangle] (2) at ([xshift=-5,yshift=8]1.north east) {};
     \node[whiteinvertedtriangle] (3) at ([xshift=-5,yshift=-8]1.south east) {};
     \node[ground, scale=0.7] (4) at ([xshift=5,yshift=8]1.north west) {};
     \draw[mixed] (3) to +(0,-.3);
     \draw[mixed] (2) to +(0,.3) ;
     \draw[pure] (3.north) to ([xshift=-5]1.south east);
     \draw[pure] (2.south) to ([xshift=-5]1.north east);
     \draw[pure] (4.south) to ([xshift=5]1.north west);
   \end{pic}.
   \label{eq:purification3}
  \end{align}
  \item We may choose $\tinyground$ and $\tinyfrob$ so that $(\cps[\cat{C}\pure],Q,\tinyground,\tinyfrob)$ is initial in this category.
  \end{itemize}
\end{theorem}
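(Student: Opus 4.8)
The plan is to follow the proof of Theorem~\ref{thm:cpmuniversal} closely; the one genuinely new ingredient is~\eqref{eq:idempotent}, which says that $\tinyfrob_{\mathcal A}$ and its dagger exhibit $F_{\mathcal A}$ as a splitting of the \emph{decoherence morphism} $e_{\mathcal A}\colon D(A)\to D(A)$ (the right-hand side of the left equation of~\eqref{eq:idempotent}, built from $\tinymult$ and $\tinyground$), and this is exactly what lets us strip the triangles off the form~\eqref{eq:purification3} and reduce to the $\cpm$-style situation governed by~\eqref{eq:unique}. For the ``if'' part of the first bullet, let $(\cat{D},D,\tinyground,\tinyfrob)$ satisfy \eqref{eq:environment}, \eqref{eq:unique}, \eqref{eq:decoherence}, \eqref{eq:idempotent} and~\eqref{eq:purification3}, and let $(\cat{D'},D',\tinygroundtwo,\tinyfrobtwo)$ satisfy \eqref{eq:environment}, \eqref{eq:unique}, \eqref{eq:decoherence} and~\eqref{eq:idempotent}. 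By~\eqref{eq:purification3} every object of $\cat{D}$ is $F_{\mathcal A}$ for some special dagger Frobenius structure $\mathcal A$ and every morphism is of the displayed form, so $F\circ D=D'$, $F(\tinyground)=\tinygroundtwo$ and $F(\tinyfrob)=\tinyfrobtwo$ force $F(F_{\mathcal A})=F'_{\mathcal A}$ on objects and force $F$ to send the form~\eqref{eq:purification3} built from $f\in\cat{C}\pure$ to the same form built from $f$ with $D'$, $\tinygroundtwo$, $\tinyfrobtwo$. Thus $F$ is uniquely determined, and it remains to see it is well-defined and a monoidal dagger functor.

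Well-definedness is the crux. Suppose two morphisms of the form~\eqref{eq:purification3}, built from $f$ and $g$ over the same Frobenius structures $\mathcal A$ and $\mathcal B$, agree in $\cat{D}$. Pre-composing with $\tinyfrob_{\mathcal B}$ and post-composing with the dagger of $\tinyfrob_{\mathcal A}$, then using the left equation of~\eqref{eq:idempotent}, turns this into $e_{\mathcal A}\circ h_f\circ e_{\mathcal B}=e_{\mathcal A}\circ h_g\circ e_{\mathcal B}$, where $h_f\colon D(B)\to D(A)$ is the morphism $(\tinyground\otimes\id)\circ D(f)$ appearing on the right of~\eqref{eq:unique}, and each $e_{\mathcal A}$ is itself such a form (its pure part is a comultiplication, with one output discarded). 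Composites of morphisms of the shape $(\tinyground\otimes\id)\circ D(-)$ are again of that shape by~\eqref{eq:environment} --- this is precisely the composition clause of the $\cpm$ construction --- so both sides equal $(\tinyground\otimes\id)\circ D(\tilde f)$ and $(\tinyground\otimes\id)\circ D(\tilde g)$ for suitable $\tilde f,\tilde g\in\cat{C}\pure$. By~\eqref{eq:unique} the $\cpm$-images of $\tilde f$ and $\tilde g$ agree in $\cat{C}\pure$; applying~\eqref{eq:unique} in $\cat{D'}$ yields the corresponding equality there, and pre- and post-composing it once more with (the dagger of) $\tinyfrobtwo$ and using the right equation of~\eqref{eq:idempotent} in $\cat{D'}$ collapses it to the required equality of the two images under $F$. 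That $F$ preserves identities ($\id[F_{\mathcal A}]$ is itself of the form~\eqref{eq:purification3}, with $\mathcal B=\mathcal A$ and a comultiplication for the pure part), composition, the monoidal structure (here also invoking~\eqref{eq:decoherence} to split a tensor of triangles) and the dagger are diagram chases of exactly this shape, and go through just as in the proof of Theorem~\ref{thm:cpmuniversal}.

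For the ``only if'' part of the first bullet, an initial $(\cat{D},D,\tinyground,\tinyfrob)$ is isomorphic to the explicit initial object produced below, which satisfies~\eqref{eq:purification3}, hence so does $(\cat{D},D,\tinyground,\tinyfrob)$. For the second bullet, equip $\cps[\cat{C}\pure]$ with $F_{\mathcal A}:=\mathcal A$ (the Frobenius structure itself, regarded as an object of $\cps[\cat{C}\pure]$), with $\tinyground$ the discarding morphism transported through the factorization of $Q$ through $P$ from the environment structure on $\cpm[\cat{C}\pure]$, and with $\tinyfrob_{\mathcal A}\colon Q(A)\to\mathcal A$ the canonical decoherence morphism --- the $\cps[\cat{C}\pure]$-morphism whose two copies of the pure part are identities, so that $\tinyfrob_{\mathcal A}$ and its dagger split $e_{\mathcal A}$ by construction; positive-dimensionality of $\cat{C}\pure$ is what makes $Q$, and hence $e_{\mathcal A}$, available. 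Then \eqref{eq:environment}, \eqref{eq:unique}, \eqref{eq:decoherence} and~\eqref{eq:idempotent} hold by inspection of the definition of $\cps$, and~\eqref{eq:purification3} is essentially a restatement of the definition of a $\cps[\cat{C}\pure]$-morphism in this notation: the two multiplication dots at the top and bottom of the defining string diagram are exactly $\tinyfrob_{\mathcal A}$ and the dagger of $\tinyfrob_{\mathcal B}$, with a morphism $(\tinyground\otimes\id)\circ D(f)$ sandwiched between them. As this data satisfies all the required axioms together with~\eqref{eq:purification3}, the ``if'' part just proved makes $(\cps[\cat{C}\pure],Q,\tinyground,\tinyfrob)$ initial.

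The step I expect to be the main obstacle is this well-definedness argument: organizing the repeated use of~\eqref{eq:idempotent} so that an equation between two~\eqref{eq:purification3}-forms genuinely descends to an equation between morphisms of the shape $(\tinyground\otimes\id)\circ D(-)$ that is governed by~\eqref{eq:unique}, and checking that the composition, monoidality and dagger clauses all reduce to this same move. A minor additional subtlety, handled exactly as the analogous one in the proof of Theorem~\ref{thm:cpmuniversal}, is that a single morphism may admit several presentations of the form~\eqref{eq:purification3}, a priori over different special dagger Frobenius structures.
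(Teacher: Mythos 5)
Your proposal is correct and follows essentially the same route as the paper's proof: the forced definition of $F$ on morphisms in the form~\eqref{eq:purification3}, well-definedness by using~\eqref{eq:idempotent} to pass between the triangle-sandwiched forms and the decoherence-map forms $e_{\mathcal A}\circ h_f\circ e_{\mathcal B}$, merging grounds via~\eqref{eq:environment} so that~\eqref{eq:unique} governs the resulting equation, and the same choices of $\tinyground$ (the cap, made a legitimate $\cps$-morphism via positive-dimensionality) and $\tinyfrob$ (the multiplication) on $\cps[\cat{C}\pure]$. Your explicit handling of the ``only if'' direction via uniqueness of initial objects is a small addition the paper leaves implicit, but it does not change the argument.
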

\noindent
Note~\eqref{eq:purification3} implies that every object of $\cat D$ equals $F_{\mathcal{A}}$ for some special dagger Frobenius structure $\mathcal{A}$ in $\cat C\pure$.
\begin{proof}
  We must show that for any $(\cat{D},D,\tinyground,\tinyfrob)$ satisfying \eqref{eq:environment}, \eqref{eq:unique}, \eqref{eq:decoherence}, \eqref{eq:idempotent} and~\eqref{eq:purification3}, and any $(\cat{D}',D',\tinygroundtwo,\tinyfrobtwo)$ satisfying~\eqref{eq:environment}, \eqref{eq:unique}, \eqref{eq:decoherence} and~\eqref{eq:idempotent}, there is a unique monoidal dagger functor $F \colon \cat D \rightarrow \cat D'$ such that $F\circ D=D'$, $D(\tinyground)=\tinygroundtwo$ and $D(\tinyfrob)=\tinyfrobtwo$.

  Every object of $D$ is ${F}_{\mathcal{A}}$ for some special dagger Frobenius structure $\mathcal{A}$ in $\cat C\pure$. Since we need $F\circ D=D'$ we must have $F$ send $F_{\mathcal{A}}$ to ${F}_{\mathcal{A}}'$. On morphisms, $F$ must send $D(f)$ to $D'(f)$,~$\tinyground$ to $\tinygroundtwo$, and $\tinyfrob$ to $\tinyfrobtwo$. Therefore we define
    \[
    F\left(
    \begin{pic}
     \node[pure, morphism] (1) at (0,.75) {$D(f)$};
     \node[greytriangle] (2) at ([xshift=-5,yshift=8]1.north east) {};
     \node[whiteinvertedtriangle] (3) at ([xshift=-5,yshift=-8]1.south east) {};
     \node[ground, scale=0.7] (4) at ([xshift=5,yshift=8]1.north west) {};
     \draw[mixed] (3) to +(0,-.3);
     \draw[mixed] (2) to +(0,.3) ;
     \draw[pure] (3.north) to ([xshift=-5]1.south east);
     \draw[pure] (2.south) to ([xshift=-5]1.north east);
     \draw[pure] (4.south) to ([xshift=5]1.north west);
   \end{pic}
     \right)
    \qquad
    =
    \qquad
    \begin{pic}
     \node[pure, morphism] (1) at (0,.75) {$D'(f)$};
     \node[greytriangletwo] (2) at ([xshift=-5,yshift=8]1.north east) {};
     \node[whiteinvertedtriangletwo] (3) at ([xshift=-5,yshift=-8]1.south east) {};
     \node[groundtwo, scale=0.7] (4) at ([xshift=5,yshift=8]1.north west) {};
     \draw[mixed] (3) to +(0,-.3);
     \draw[mixed] (2) to +(0,.3) ;
     \draw[pure] (3.north) to ([xshift=-5]1.south east);
     \draw[pure] (2.south) to ([xshift=-5]1.north east);
     \draw[pure] (4.south) to ([xshift=5]1.north west);
     \end{pic}.
  \]
  By~\eqref{eq:purification3}, this completely fixes $F$, so it suffices to verify that $F$ is indeed a well-defined monoidal dagger functor. To prove that $F$ is well-defined note that
  \begin{align*}
   &
     \begin{pic}
       \node[pure, morphism] (1) at (0,0.9) {$D(f)$};
       \node[darkgreytriangle] (3) at ([xshift=-5,yshift=10]1.north east) {};
       \node[whiteinvertedtriangle] (4) at ([xshift=-5,yshift=-7]1.south east) {};
       \draw[pure] ([xshift=5]1.north west) to +(0,.15) node[ground,scale=0.7]{};
       \draw [pure,diredgeend] (4) to ([xshift=-5]1.south east);
       \draw [pure,diredge] ([xshift=-5]1.north east) to (3) ;
       \draw [pure,diredge=<] (4) to +(0,-.3);
       \draw [pure,diredge] (3) to +(0,.3);       
     \end{pic}
     =
     \begin{pic}
       \node[pure, morphism] (1) at (0,0.9) {$D(g)$};
       \node[darkgreytriangle] (3) at ([xshift=-5,yshift=10]1.north east) {};
       \node[whiteinvertedtriangle] (4) at ([xshift=-5,yshift=-7]1.south east) {};
       \draw[pure] ([xshift=5]1.north west) to +(0,.15) node[ground,scale=0.7]{};
       \draw [pure,diredgeend] (4) to ([xshift=-5]1.south east);
       \draw [pure,diredge] ([xshift=-5]1.north east) to (3) ;
       \draw [pure,diredge=<] (4) to +(0,-.3);
       \draw [pure,diredge] (3) to +(0,.3);       
     \end{pic}
    \quad
     \text{in $\cat{D}$}   
     \quad & \stackrel{\eqref{eq:idempotent}}{\iff} & \quad 
     \begin{pic}
       \node[pure, morphism] (1) at (0,0.9) {$D(f)$};
       \node[darkgreytriangle] (3) at ([xshift=-5,yshift=10]1.north east) {};
       \node[whiteinvertedtriangle] (4) at ([xshift=-5,yshift=-7]1.south east) {};
       \node[darkgreyinvertedtriangle] (5) at ([yshift=10]3) {};  
       \node[whitetriangle] (6) at ([yshift=-10]4) {};
       \draw[pure] ([xshift=5]1.north west) to +(0,.15) node[ground,scale=0.7]{};
       \draw [pure,diredgeend] (4) to ([xshift=-5]1.south east);
       \draw [pure,diredge] ([xshift=-5]1.north east) to (3) ;
       \draw [pure,diredge=<] (6) to +(0,-.3);
       \draw [pure,diredge] (5) to +(0,.3);    
       \draw [mixed,diredge] (3) to (5);
       \draw [mixed,diredge] (6) to (4);
     \end{pic}
     =
     \begin{pic}
       \node[pure, morphism] (1) at (0,0.9) {$D(g)$};
       \node[darkgreytriangle] (3) at ([xshift=-5,yshift=10]1.north east) {};
       \node[whiteinvertedtriangle] (4) at ([xshift=-5,yshift=-7]1.south east) {};
       \node[darkgreyinvertedtriangle] (5) at ([yshift=10]3) {};  
       \node[whitetriangle] (6) at ([yshift=-10]4) {};
       \draw[pure] ([xshift=5]1.north west) to +(0,.15) node[ground,scale=0.7]{};
       \draw [pure,diredgeend] (4) to ([xshift=-5]1.south east);
       \draw [pure,diredge] ([xshift=-5]1.north east) to (3) ;
       \draw [pure,diredge=<] (6) to +(0,-.3);
       \draw [pure,diredge] (5) to +(0,.3);    
       \draw [mixed,diredge] (3) to (5);
       \draw [mixed,diredge] (6) to (4);
     \end{pic}
    \quad
     \text{in $\cat{D}$}   \\ 
     \stackrel{\eqref{eq:idempotent}}{\iff} \quad &
     \begin{pic}
       \node[pure, morphism] (1) at (0,0.9) {$D(f)$};
       \node[darkgreydot] (3) at ([xshift=-5,yshift=15]1.north east) {};
       \node[whitedot] (4) at ([xshift=-5,yshift=-20]1.south east) {};
       \node[ground,scale=0.7] (5) at ([xshift=-25,yshift=10]3) {};
       \node[ground,scale=0.7] (6) at ([xshift=-25,yshift=10]4) {};
       \draw[pure,diredge] (3.east) to[out=180,in=-90,looseness=1] (5);
       \draw[pure,diredge] (4.east) to[out=180,in=-90,looseness=1] (6);
       \draw[pure] ([xshift=5]1.north west) to +(0,.15) node[ground,scale=0.7]{};
       \draw [pure,diredge] (4) to ([xshift=-5]1.south east);
       \draw [pure,diredge] ([xshift=-5]1.north east) to (3) ;
       \draw [pure,diredge=<] (4) to +(0,-.4);
       \draw [pure,diredge] (3) to +(0,.6);
     \end{pic}
     =
     \begin{pic}
       \node[pure, morphism] (1) at (0,0.9) {$D(g)$};
       \node[darkgreydot] (3) at ([xshift=-5,yshift=15]1.north east) {};
       \node[whitedot] (4) at ([xshift=-5,yshift=-20]1.south east) {};
       \node[ground,scale=0.7] (5) at ([xshift=-25,yshift=10]3) {};
       \node[ground,scale=0.7] (6) at ([xshift=-25,yshift=10]4) {};
       \draw[pure,diredge] (3.east) to[out=180,in=-90,looseness=1] (5);
       \draw[pure,diredge] (4.east) to[out=180,in=-90,looseness=1] (6);
       \draw[pure] ([xshift=5]1.north west) to +(0,.15) node[ground,scale=0.7]{};
       \draw [pure,diredge] (4) to ([xshift=-5]1.south east);
       \draw [pure,diredge] ([xshift=-5]1.north east) to (3) ;
       \draw [pure,diredge=<] (4) to +(0,-.4);
       \draw [pure,diredge] (3) to +(0,.6);
     \end{pic}
    \quad
     \text{in $\cat{D}$}   
     \quad & \stackrel{\eqref{eq:environment}}{\iff} & \quad
    \begin{pic}
       \node[pure, morphism] (1) at (0,0.9) {$D(f)$};
       \node[darkgreydot] (3) at ([xshift=-5,yshift=15]1.north east) {};
       \node[whitedot] (4) at ([xshift=-5,yshift=-20]1.south east) {};
       \node[ground,scale=0.7] (5) at ([xshift=-40,yshift=10]3) {};
       \draw[pure,diredge] (3.east) to[out=180,in=-90,looseness=1] ([xshift=3]5);
       \draw[pure,diredge] ([xshift=5]1.north west) to[out=90,in=-90,looseness=1] (5);
       \draw[pure,diredge] (4.east) to[out=180,in=-90,looseness=1] ([xshift=-3]5);
       \draw [pure,diredge] (4) to ([xshift=-5]1.south east);
       \draw [pure,diredge] ([xshift=-5]1.north east) to (3) ;
       \draw [pure,diredge=<] (4) to +(0,-.4);
       \draw [pure,diredge] (3) to +(0,.6);
     \end{pic}
     =
     \begin{pic}
       \node[pure, morphism] (1) at (0,0.9) {$D(g)$};
       \node[darkgreydot] (3) at ([xshift=-5,yshift=15]1.north east) {};
       \node[whitedot] (4) at ([xshift=-5,yshift=-20]1.south east) {};
       \node[ground,scale=0.7] (5) at ([xshift=-40,yshift=10]3) {};
       \draw[pure,diredge] (3.east) to[out=180,in=-90,looseness=1] ([xshift=3]5);
       \draw[pure,diredge] ([xshift=5]1.north west) to[out=90,in=-90,looseness=1] (5);
       \draw[pure,diredge] (4.east) to[out=180,in=-90,looseness=1] ([xshift=-3]5);
       \draw [pure,diredge] (4) to ([xshift=-5]1.south east);
       \draw [pure,diredge] ([xshift=-5]1.north east) to (3) ;
       \draw [pure,diredge=<] (4) to +(0,-.4);
       \draw [pure,diredge] (3) to +(0,.6);
     \end{pic}
    \quad
     \text{in $\cat{D}$}   \\ 
     \stackrel{\eqref{eq:unique}}{\iff} \quad &
     \begin{pic}
       \node[pure, morphism, vflip] (1) at (0,0.9) {$f$};
       \node[pure, morphism] (2) at (1,0.9) {$f$};
       \node[darkgreydot] (3) at ([yshift=15]1.north west) {};
       \node[whitedot] (4) at ([yshift=-10]1.south) {};
       \node[darkgreydot] (5) at ([yshift=15]2.north east) {};
       \node[whitedot] (6) at ([yshift=-10]2.south) {};
       \draw [pure,diredge] (6) to (2.south) ;
       \draw [pure,diredge=<] (4) to (1.south);
       \draw [pure,diredge=<] (4) to (6);
       \draw [pure,diredge=<]  (1.north east) to[out=90,in=90,looseness=1.25] (2.north west) ;
       \draw [pure,diredge=<] (1.north west) to (3) ;
       \draw [pure,diredge] (2.north east) to (5) ;
       \draw [pure,diredge=<] (3) to (5);
       \draw [pure,diredge] (4) to +(0,-.6);
       \draw [pure,diredge=<] (6) to +(0,-.6);
       \draw [pure,diredge=<] (3) to +(0,.6);
       \draw [pure,diredge] (5) to +(0,.6);
     \end{pic}
     =
     \begin{pic}
       \node[pure, morphism, vflip] (1) at (0,0.9) {$g$};
       \node[pure, morphism] (2) at (1,0.9) {$g$};
       \node[darkgreydot] (3) at ([yshift=15]1.north west) {};
       \node[whitedot] (4) at ([yshift=-10]1.south) {};
       \node[darkgreydot] (5) at ([yshift=15]2.north east) {};
       \node[whitedot] (6) at ([yshift=-10]2.south) {};
       \draw [pure,diredge] (6) to (2.south) ;
       \draw [pure,diredge=<] (4) to (1.south);
       \draw [pure,diredge=<] (4) to (6);
       \draw [pure,diredge=<]  (1.north east) to[out=90,in=90,looseness=1.25] (2.north west) ;
       \draw [pure,diredge=<] (1.north west) to (3) ;
       \draw [pure,diredge] (2.north east) to (5) ;
       \draw [pure,diredge=<] (3) to (5);
       \draw [pure,diredge] (4) to +(0,-.6);
       \draw [pure,diredge=<] (6) to +(0,-.6);
       \draw [pure,diredge=<] (3) to +(0,.6);
       \draw [pure,diredge] (5) to +(0,.6);
     \end{pic}
       \quad
     \text{in $\cat{C}\pure$}
  \end{align*}
  Similarly 
  \[
      \begin{pic}
       \node[pure, morphism] (1) at (0,0.9) {$D'(f)$};
       \node[darkgreytriangletwo] (3) at ([xshift=-5,yshift=10]1.north east) {};
       \node[whiteinvertedtriangletwo] (4) at ([xshift=-5,yshift=-7]1.south east) {};
       \draw[pure] ([xshift=5]1.north west) to +(0,.15) node[groundtwo,scale=0.7]{};
       \draw [pure,diredgeend] (4) to ([xshift=-5]1.south east);
       \draw [pure,diredge] ([xshift=-5]1.north east) to (3) ;
       \draw [pure,diredge=<] (4) to +(0,-.3);
       \draw [pure,diredge] (3) to +(0,.3);       
     \end{pic}
     =
     \begin{pic}
       \node[pure, morphism] (1) at (0,0.9) {$D'(g)$};
       \node[darkgreytriangletwo] (3) at ([xshift=-5,yshift=10]1.north east) {};
       \node[whiteinvertedtriangletwo] (4) at ([xshift=-5,yshift=-7]1.south east) {};
       \draw[pure] ([xshift=5]1.north west) to +(0,.15) node[groundtwo,scale=0.7]{};
       \draw [pure,diredgeend] (4) to ([xshift=-5]1.south east);
       \draw [pure,diredge] ([xshift=-5]1.north east) to (3) ;
       \draw [pure,diredge=<] (4) to +(0,-.3);
       \draw [pure,diredge] (3) to +(0,.3);       
     \end{pic}
    \quad
     \text{in $\cat{D'}$}   
     \quad  \iff  \quad
     \begin{pic}
       \node[pure, morphism, vflip] (1) at (0,0.9) {$f$};
       \node[pure, morphism] (2) at (1,0.9) {$f$};
       \node[darkgreydot] (3) at ([yshift=15]1.north west) {};
       \node[whitedot] (4) at ([yshift=-10]1.south) {};
       \node[darkgreydot] (5) at ([yshift=15]2.north east) {};
       \node[whitedot] (6) at ([yshift=-10]2.south) {};
       \draw [pure,diredge] (6) to (2.south) ;
       \draw [pure,diredge=<] (4) to (1.south);
       \draw [pure,diredge=<] (4) to (6);
       \draw [pure,diredge=<]  (1.north east) to[out=90,in=90,looseness=1.25] (2.north west) ;
       \draw [pure,diredge=<] (1.north west) to (3) ;
       \draw [pure,diredge] (2.north east) to (5) ;
       \draw [pure,diredge=<] (3) to (5);
       \draw [pure,diredge] (4) to +(0,-.6);
       \draw [pure,diredge=<] (6) to +(0,-.6);
       \draw [pure,diredge=<] (3) to +(0,.6);
       \draw [pure,diredge] (5) to +(0,.6);
     \end{pic}
     =
     \begin{pic}
       \node[pure, morphism, vflip] (1) at (0,0.9) {$g$};
       \node[pure, morphism] (2) at (1,0.9) {$g$};
       \node[darkgreydot] (3) at ([yshift=15]1.north west) {};
       \node[whitedot] (4) at ([yshift=-10]1.south) {};
       \node[darkgreydot] (5) at ([yshift=15]2.north east) {};
       \node[whitedot] (6) at ([yshift=-10]2.south) {};
       \draw [pure,diredge] (6) to (2.south) ;
       \draw [pure,diredge=<] (4) to (1.south);
       \draw [pure,diredge=<] (4) to (6);
       \draw [pure,diredge=<]  (1.north east) to[out=90,in=90,looseness=1.25] (2.north west) ;
       \draw [pure,diredge=<] (1.north west) to (3) ;
       \draw [pure,diredge] (2.north east) to (5) ;
       \draw [pure,diredge=<] (3) to (5);
       \draw [pure,diredge] (4) to +(0,-.6);
       \draw [pure,diredge=<] (6) to +(0,-.6);
       \draw [pure,diredge=<] (3) to +(0,.6);
       \draw [pure,diredge] (5) to +(0,.6);
     \end{pic}
    \quad
     \text{in $\cat{C}\pure$}   \\ 
\]
  and hence
   \[
     \begin{pic}
       \node[pure, morphism] (1) at (0,0.9) {$D(f)$};
       \node[darkgreytriangle] (3) at ([xshift=-5,yshift=10]1.north east) {};
       \node[whiteinvertedtriangle] (4) at ([xshift=-5,yshift=-7]1.south east) {};
       \draw[pure] ([xshift=5]1.north west) to +(0,.15) node[ground,scale=0.7]{};
       \draw [pure,diredgeend] (4) to ([xshift=-5]1.south east);
       \draw [pure,diredge] ([xshift=-5]1.north east) to (3) ;
       \draw [pure,diredge=<] (4) to +(0,-.3);
       \draw [pure,diredge] (3) to +(0,.3);       
     \end{pic}
     =
     \begin{pic}
       \node[pure, morphism] (1) at (0,0.9) {$D(g)$};
       \node[darkgreytriangle] (3) at ([xshift=-5,yshift=10]1.north east) {};
       \node[whiteinvertedtriangle] (4) at ([xshift=-5,yshift=-7]1.south east) {};
       \draw[pure] ([xshift=5]1.north west) to +(0,.15) node[ground,scale=0.7]{};
       \draw [pure,diredgeend] (4) to ([xshift=-5]1.south east);
       \draw [pure,diredge] ([xshift=-5]1.north east) to (3) ;
       \draw [pure,diredge=<] (4) to +(0,-.3);
       \draw [pure,diredge] (3) to +(0,.3);       
     \end{pic}
    \quad
     \text{in $\cat{D}$}   
     \quad  \iff  \quad
     \begin{pic}
       \node[pure, morphism] (1) at (0,0.9) {$D'(f)$};
       \node[darkgreytriangletwo] (3) at ([xshift=-5,yshift=10]1.north east) {};
       \node[whiteinvertedtriangletwo] (4) at ([xshift=-5,yshift=-7]1.south east) {};
       \draw[pure] ([xshift=5]1.north west) to +(0,.15) node[groundtwo,scale=0.7]{};
       \draw [pure,diredgeend] (4) to ([xshift=-5]1.south east);
       \draw [pure,diredge] ([xshift=-5]1.north east) to (3) ;
       \draw [pure,diredge=<] (4) to +(0,-.3);
       \draw [pure,diredge] (3) to +(0,.3);       
     \end{pic}
     =
     \begin{pic}
       \node[pure, morphism] (1) at (0,0.9) {$D'(g)$};
       \node[darkgreytriangletwo] (3) at ([xshift=-5,yshift=10]1.north east) {};
       \node[whiteinvertedtriangletwo] (4) at ([xshift=-5,yshift=-7]1.south east) {};
       \draw[pure] ([xshift=5]1.north west) to +(0,.15) node[groundtwo,scale=0.7]{};
       \draw [pure,diredgeend] (4) to ([xshift=-5]1.south east);
       \draw [pure,diredge] ([xshift=-5]1.north east) to (3) ;
       \draw [pure,diredge=<] (4) to +(0,-.3);
       \draw [pure,diredge] (3) to +(0,.3);       
     \end{pic}
    \quad
     \text{in $\cat{D'}$.}\\ 
  \]
  Functoriality of $F$ is established by showing that it preserves identities:
    \[
    \begin{pic}
    \draw[diredge] (0,0) node[below] {$F_\mathcal{A}$} to (0,1.8);
    \end{pic}
    \quad = \quad
    \begin{pic}
       \node[pure, morphism] (1) at (0,0.9) {$D(id_A)$};
       \node[greytriangle] (3) at ([yshift=10]1.north) {};
       \node[greyinvertedtriangle] (4) at ([yshift=-7]1.south) {};
       \draw [pure,diredgeend] (4) to (1.south);
       \draw [pure,diredge] (1.north) to (3) ;
       \draw [pure,diredge=<] (4) to (0,0) node[below] {$F_\mathcal{A}$};
       \draw [pure,diredge] (3) to (0,1.8);       
     \end{pic}
    \quad\stackrel{F}\longmapsto\quad
    \begin{pic}
       \node[pure, morphism] (1) at (0,0.9) {$D'(id_A)$};
       \node[greytriangletwo] (3) at ([yshift=10]1.north) {};
       \node[greyinvertedtriangletwo] (4) at ([yshift=-7]1.south) {};
       \draw [pure,diredgeend] (4) to (1.south);
       \draw [pure,diredge] (1.north) to (3) ;
       \draw [pure,diredge=<] (4) to (0,0) node[below] {$F_\mathcal{A}$};
       \draw [pure,diredge] (3) to (0,1.8);       
     \end{pic}
    \quad=\quad
    \begin{pic}
    \draw[diredge] (0,0) node[below] {$F'_\mathcal{A}$} to (0,1.8);
    \end{pic}
    \]
    and that it preserves composition:
    \[
    \begin{pic}
   \node[morphism] (1) at (0,0.8) {$D(g)$};
   \node[whiteinvertedtriangle] (4) at ([xshift=-5,yshift=-10]1.south east) {};
   \node[greytriangle] (3) at ([xshift=-5,yshift=8]1.north east) {};
   \node[greyinvertedtriangle] (8) at ([yshift=20]3) {};   
   \node[morphism] (5) at ([yshift=47]1) {$D(f)$};
   \node[blacktriangle] (7) at ([xshift=-5,yshift=7]5.north east) {};
   \node[ground,scale=0.8] (2) at ([xshift=5,yshift=7]5.north west){};
   \node[ground,scale=0.8] (6) at ([xshift=5,yshift=7]1.north west){};
   \draw [mixed,diredge] ([yshift=-10]4.center) to (4.center) ;
   \draw [pure,diredge] (4) to[out=90,in=-90] ([xshift=-5]1.south east);
   \draw [pure,diredge] ([xshift=-5]1.north east) to (3) ;
   \draw [mixed,diredge] (3) to (8) ;
   \draw [pure,diredgeend] (8) to[out=90, in=-90]  ([xshift=-5]5.south east);
   \draw [pure,diredgeend] ([xshift=-5]5.north east) to (7) ;
   \draw [mixed,diredgeend] (7) to +(0,.4);
   \draw [pure,diredgeend] ([xshift=5]5.north west) to[out=90,in=-90] (2) ;
   \draw [pure,diredgeend] ([xshift=5]1.north west) to[out=90,in=-90] (6) ;
  \end{pic}
  \quad=\quad
    \begin{pic}
   \node[morphism] (1) at (0,0.8) {$D(g)$};
   \node[whiteinvertedtriangle] (4) at ([xshift=-5,yshift=-10]1.south east) {};
   \node[greydot] (3) at ([xshift=-5,yshift=18]1.north east) {};
   \node[morphism] (5) at ([yshift=47]1) {$D(f)$};
   \node[blacktriangle] (7) at ([xshift=-5,yshift=7]5.north east) {};
   \node[ground,scale=0.8] (2) at ([xshift=5,yshift=7]5.north west){};
   \node[ground,scale=0.8] (6) at ([xshift=5,yshift=7]1.north west){};
   \node[ground,scale=0.8] (9) at ([xshift=5,yshift=27]1.north west){};
   \draw [mixed,diredge] ([yshift=-10]4.center) to (4.center) ;
   \draw [pure,diredge] (4) to[out=90,in=-90] ([xshift=-5]1.south east);
   \draw [pure,diredge] ([xshift=-5]1.north east) to (3) ;
   \draw [pure,diredgeend] (3) to[out=90, in=-90]  ([xshift=-5]5.south east);
   \draw [pure,diredgeend] ([xshift=-5]5.north east) to (7) ;
   \draw [mixed,diredgeend] (7) to +(0,.4);
   \draw [pure,diredgeend] ([xshift=5]5.north west) to[out=90,in=-90] (2) ;
   \draw [pure,diredgeend] ([xshift=5]1.north west) to[out=90,in=-90] (6) ;
   \draw [pure,diredge] (3) to[out=180, in=-90]  (9);
  \end{pic}
    \quad=\quad
    \begin{pic}
   \node[morphism] (1) at (0,0.8) {$D(g)$};
   \node[whiteinvertedtriangle] (4) at ([xshift=-5,yshift=-10]1.south east) {};
   \node[greydot] (3) at ([xshift=-5,yshift=18]1.north east) {};
   \node[morphism] (5) at ([yshift=47]1) {$D(f)$};
   \node[blacktriangle] (7) at ([xshift=-5,yshift=7]5.north east) {};
   \node[ground,scale=0.8] (2) at ([xshift=-10,yshift=7]5.north west){};
   \draw [mixed,diredge] ([yshift=-10]4.center) to (4.center) ;
   \draw [pure,diredge] (4) to[out=90,in=-90] ([xshift=-5]1.south east);
   \draw [pure,diredge] ([xshift=-5]1.north east) to (3) ;
   \draw [pure,diredgeend] (3) to[out=90, in=-90]  ([xshift=-5]5.south east);
   \draw [pure,diredgeend] ([xshift=-5]5.north east) to (7) ;
   \draw [mixed,diredgeend] (7) to +(0,.4);
   \draw [pure,diredge] ([xshift=5]1.north west) to[out=90,in=-90] ([xshift=-3]2) ;
   \draw [pure,diredge] (3) to[out=180, in=-90]  (2);
   \draw [pure,diredge] ([xshift=5]5.north west) to[out=90,in=-90] ([xshift=3]2) ;
  \end{pic}
   \quad \stackrel{F}\longmapsto\quad
    \begin{pic}
   \node[morphism] (1) at (0,0.8) {$D'(g)$};
   \node[whiteinvertedtriangletwo] (4) at ([xshift=-5,yshift=-10]1.south east) {};
   \node[greydot] (3) at ([xshift=-5,yshift=18]1.north east) {};
   \node[morphism] (5) at ([yshift=47]1) {$D'(f)$};
   \node[blacktriangletwo] (7) at ([xshift=-5,yshift=7]5.north east) {};
   \node[groundtwo,scale=0.8] (2) at ([xshift=-10,yshift=7]5.north west){};
   \draw [mixed,diredge] ([yshift=-10]4.center) to (4.center) ;
   \draw [pure,diredge] (4) to ([xshift=-5]1.south east);
   \draw [pure,diredge] ([xshift=-5]1.north east) to (3) ;
   \draw [pure,diredgeend] (3) to[out=90, in=-90]  ([xshift=-5]5.south east);
   \draw [pure,diredgeend] ([xshift=-5]5.north east) to (7) ;
   \draw [mixed,diredgeend] (7) to +(0,.4);
   \draw [pure,diredge] ([xshift=5]1.north west) to[out=90,in=-90] ([xshift=-3]2) ;
   \draw [pure,diredge] (3) to[out=180, in=-90]  (2);
   \draw [pure,diredge] ([xshift=5]5.north west) to[out=90,in=-90] ([xshift=3]2) ;
  \end{pic}
   \quad =\quad
    \begin{pic}
   \node[morphism] (1) at (0,0.8) {$D'(g)$};
   \node[whiteinvertedtriangletwo] (4) at ([xshift=-5,yshift=-10]1.south east) {};
   \node[greydot] (3) at ([xshift=-5,yshift=18]1.north east) {};
   \node[morphism] (5) at ([yshift=47]1) {$D'(f)$};
   \node[blacktriangletwo] (7) at ([xshift=-5,yshift=7]5.north east) {};
   \node[groundtwo,scale=0.8] (2) at ([xshift=5,yshift=7]5.north west){};
   \node[groundtwo,scale=0.8] (6) at ([xshift=5,yshift=7]1.north west){};
   \node[groundtwo,scale=0.8] (9) at ([xshift=5,yshift=27]1.north west){};
   \draw [mixed,diredge] ([yshift=-10]4.center) to (4.center) ;
   \draw [pure,diredge] (4) to ([xshift=-5]1.south east);
   \draw [pure,diredge] ([xshift=-5]1.north east) to (3) ;
   \draw [pure,diredgeend] (3) to[out=90, in=-90]  ([xshift=-5]5.south east);
   \draw [pure,diredgeend] ([xshift=-5]5.north east) to (7) ;
   \draw [mixed,diredgeend] (7) to +(0,.4);
   \draw [pure,diredgeend] ([xshift=5]5.north west) to[out=90,in=-90] (2) ;
   \draw [pure,diredgeend] ([xshift=5]1.north west) to[out=90,in=-90] (6) ;
   \draw [pure,diredge] (3) to[out=180, in=-90]  (9);
  \end{pic}
  \quad=\quad
  \begin{pic}
   \node[morphism] (1) at (0,0.8) {$D'(g)$};
   \node[whiteinvertedtriangletwo] (4) at ([xshift=-5,yshift=-10]1.south east) {};
   \node[greytriangletwo] (3) at ([xshift=-5,yshift=8]1.north east) {};
   \node[greyinvertedtriangletwo] (8) at ([yshift=20]3) {};   
   \node[morphism] (5) at ([yshift=47]1) {$D'(f)$};
   \node[blacktriangletwo] (7) at ([xshift=-5,yshift=7]5.north east) {};
   \node[groundtwo,scale=0.8] (2) at ([xshift=5,yshift=7]5.north west){};
   \node[groundtwo,scale=0.8] (6) at ([xshift=5,yshift=7]1.north west){};
   \draw [mixed,diredge] ([yshift=-10]4.center) to (4.center) ;
   \draw [pure,diredge] (4) to ([xshift=-5]1.south east);
   \draw [pure,diredge] ([xshift=-5]1.north east) to (3) ;
   \draw [mixed,diredge] (3) to (8) ;
   \draw [pure,diredgeend] (8) to  ([xshift=-5]5.south east);
   \draw [pure,diredgeend] ([xshift=-5]5.north east) to (7) ;
   \draw [mixed,diredgeend] (7) to +(0,.4);
   \draw [pure,diredgeend] ([xshift=5]5.north west) to[out=90,in=-90] (2) ;
   \draw [pure,diredgeend] ([xshift=5]1.north west) to[out=90,in=-90] (6) ;
  \end{pic}
    \]
    The functor $F$ is monoidal:
    \[
    \begin{pic}
       \node[pure, morphism] (1) at (0,0.9) {$D(f)$};
       \node[darkgreytriangle] (3) at ([xshift=-5,yshift=20]1.north east) {};
       \node[whiteinvertedtriangle] (4) at ([xshift=-5,yshift=-17]1.south east) {};
       \node[pure, morphism] (2) at (1.5,0.9) {$D(g)$};
       \node[darkgreytriangle] (5) at ([xshift=-5,yshift=20]2.north east) {};
       \node[whiteinvertedtriangle] (6) at ([xshift=-5,yshift=-17]2.south east) {};
       \draw[pure] ([xshift=5]1.north west) to +(0,.15) node[ground,scale=0.7]{};
       \draw [pure,diredgeend] (4) to ([xshift=-5]1.south east);
       \draw [pure,diredge] ([xshift=-5]1.north east) to (3) ;
       \draw [pure,diredge=<] (4) to +(0,-.6);
       \draw [pure,diredge] (3) to +(0,.6); 
       \draw[pure] ([xshift=5]2.north west) to +(0,.15) node[ground,scale=0.7]{};
       \draw [pure,diredgeend] (6) to ([xshift=-5]2.south east);
       \draw [pure,diredge] ([xshift=-5]2.north east) to (5) ;
       \draw [pure,diredge=<] (6) to +(0,-.6);
       \draw [pure,diredge] (5) to +(0,.6);             
     \end{pic}
    \quad=\quad
    \begin{pic}
       \node[pure, morphism] (1) at (0,0.9) {$D(f)$};
       \node[pure, morphism] (2) at (1.5,0.9) {$D(g)$};
       \node[darkgreytriangle] (5) at ([xshift=-5,yshift=20]2.north east) {};
       \node[whiteinvertedtriangle] (6) at ([xshift=-5,yshift=-17]2.south east) {};
       \node[ground] (7) at ([xshift=8,yshift=20]1.north west){};
       \draw[pure,diredge] ([xshift=5]1.north west) to[out=90,in=-90] ([xshift=-3]7);
       \draw[pure,diredge] ([xshift=5]2.north west) to[out=90,in=-90] ([xshift=3]7);
       \draw [pure,diredge] ([xshift=-5]1.north east) to[out=90,in=-90] ([xshift=-1]5);
       \draw [pure,diredge] ([xshift=-4]2.north east) to[out=90,in=-90] ([xshift=1]5) ;
       \draw [pure,diredge] ([xshift=-1]5.center) to[out=90,in=-90]  ([xshift=-5,yshift=36]1.north east); 
       \draw [pure,diredge] ([xshift=1]5.center) to[out=90,in=-90] +(0,.6);
       \draw [pure,diredge] (6.center) to[out=90,in=-90] ([xshift=-5]1.south east);
       \draw [pure,diredge=<] ([xshift=-1]6.center) to[out=-90,in=90] ([xshift=-5,yshift=-36]1.south east);
       \draw [pure,diredge=<] ([xshift=-4]2.south east) to[out=-90,in=90] ([xshift=1]6.center);
       \draw [pure,diredge=<] ([xshift=1]6.center) to[out=-90,in=90] +(0,-.6);
     \end{pic}
     \quad\stackrel{F}\longmapsto\quad
      \begin{pic}
       \node[pure, morphism] (1) at (0,0.9) {$D'(f)$};
       \node[pure, morphism] (2) at (1.5,0.9) {$D'(g)$};
       \node[darkgreytriangletwo] (5) at ([xshift=-5,yshift=20]2.north east) {};
       \node[whiteinvertedtriangletwo] (6) at ([xshift=-5,yshift=-17]2.south east) {};
       \node[groundtwo] (7) at ([xshift=8,yshift=20]1.north west){};
       \draw[pure,diredge] ([xshift=5]1.north west) to[out=90,in=-90] ([xshift=-3]7);
       \draw[pure,diredge] ([xshift=5]2.north west) to[out=90,in=-90] ([xshift=3]7);
       \draw [pure,diredge] ([xshift=-5]1.north east) to[out=90,in=-90] ([xshift=-1]5) ;
       \draw [pure,diredge=<]  ([xshift=1]5) to[out=-90,in=90] ([xshift=-5]2.north east) ;
       \draw [pure,diredge] ([xshift=-1]5.center) to[out=90,in=-90]  ([xshift=-5,yshift=36]1.north east);
       \draw [pure,diredge] ([xshift=1]5.center) to[out=90,in=-90] +(0,.6);
       \draw [pure,diredge] (6.center) to[out=90,in=-90] ([xshift=-5]1.south east);
       \draw [pure,diredge=<] ([xshift=-1]6.center) to[out=-90,in=90] ([xshift=-5,yshift=-36]1.south east);
       \draw [pure,diredge=<]  ([xshift=-4]2.south east) to[out=-90,in=90] ([xshift=1]6.center);
       \draw [pure,diredge=<] ([xshift=1]6.center) to[out=-90,in=90] +(0,-.6);
     \end{pic}
   \quad =\quad
    \begin{pic}
       \node[pure, morphism] (1) at (0,0.9) {$D'(f)$};
       \node[darkgreytriangletwo] (3) at ([xshift=-5,yshift=20]1.north east) {};
       \node[whiteinvertedtriangletwo] (4) at ([xshift=-5,yshift=-17]1.south east) {};
       \node[pure, morphism] (2) at (1.5,0.9) {$D'(g)$};
       \node[darkgreytriangletwo] (5) at ([xshift=-5,yshift=20]2.north east) {};
       \node[whiteinvertedtriangletwo] (6) at ([xshift=-5,yshift=-17]2.south east) {};
       \draw[pure] ([xshift=5]1.north west) to +(0,.15) node[groundtwo,scale=0.7]{};
       \draw [pure,diredgeend] (4) to ([xshift=-5]1.south east);
       \draw [pure,diredge] ([xshift=-5]1.north east) to (3) ;
       \draw [pure,diredge=<] (4) to +(0,-.6);
       \draw [pure,diredge] (3) to +(0,.6); 
       \draw[pure] ([xshift=5]2.north west) to +(0,.15) node[groundtwo,scale=0.7]{};
       \draw [pure,diredgeend] (6) to ([xshift=-5]2.south east);
       \draw [pure,diredge] ([xshift=-5]2.north east) to (5) ;
       \draw [pure,diredge=<] (6) to +(0,-.6);
       \draw [pure,diredge] (5) to +(0,.6);             
     \end{pic}
    \]
Finally, $F$ preserves daggers:
    \[
    \begin{pic}
      \node[morphism, hflip] (1) at (0,.5) {$D(f)$};
      \node[whitetriangle] (3) at ([xshift=-5,yshift=10]1.north east) {};
      \node[blackinvertedtriangle] (4) at ([xshift=-5,yshift=-10]1.south east) {};
      \node[ground, scale=-0.7] (2) at ([yshift=-4,xshift=5]1.south west) {};
      \draw[diredge] ([xshift=5]1.south west) to (2.north) ;
      \draw[diredge]  ([xshift=-5]1.south east) to (4);
      \draw[diredge]  (4) to +(0,-.3);
      \draw[diredge=<]  ([xshift=-5]1.north east) to (3);
      \draw[diredge=<]  (3) to +(0,0.3);
    \end{pic}
    \quad=\quad
    \begin{pic}
      \node[morphism, hflip] (1) at (0,.5) {$D(f)$};
      \node[whitetriangle] (3) at ([xshift=-5,yshift=10]1.north east) {};
      \node[blackinvertedtriangle] (4) at ([xshift=-5,yshift=-10]1.south east) {};
      \node[ground, scale=0.7] (2) at ([yshift=15,xshift=-8]1.south west) {};
      \draw[diredge] ([xshift=5]1.south west) to[out=-90,in=-90,looseness=2] (2) ;
      \draw[diredge]  ([xshift=-5]1.south east) to (4);
      \draw[diredge]  (4) to +(0,-.3);
      \draw[diredge=<]  ([xshift=-5]1.north east) to (3);
      \draw[diredge=<]  (3) to +(0,0.3);
    \end{pic}
   \quad  \stackrel{F}\longmapsto\quad
      \begin{pic}
      \node[morphism, hflip] (1) at (0,.5) {$D'(f)$};
      \node[whitetriangletwo] (3) at ([xshift=-5,yshift=10]1.north east) {};
      \node[blackinvertedtriangletwo] (4) at ([xshift=-5,yshift=-10]1.south east) {};
      \node[groundtwo, scale=0.7] (2) at ([yshift=15,xshift=-8]1.south west) {};
      \draw[diredge] ([xshift=5]1.south west) to[out=-90,in=-90,looseness=2] (2) ;
      \draw[diredge]  ([xshift=-5]1.south east) to (4);
      \draw[diredge]  (4) to +(0,-.3);
      \draw[diredge=<]  ([xshift=-5]1.north east) to (3);
      \draw[diredge=<]  (3) to +(0,0.3);
    \end{pic}
    \quad=\quad
   \begin{pic}
      \node[morphism, hflip] (1) at (0,.5) {$D'(f)$};
      \node[whitetriangletwo] (3) at ([xshift=-5,yshift=10]1.north east) {};
      \node[blackinvertedtriangletwo] (4) at ([xshift=-5,yshift=-10]1.south east) {};
      \node[groundtwo, scale=-0.7] (2) at ([yshift=-4,xshift=5]1.south west) {};
      \draw[diredge] ([xshift=5]1.south west) to (2.north) ;
      \draw[diredge]  ([xshift=-5]1.south east) to (4);
      \draw[diredge]  (4) to +(0,-.3);
      \draw[diredge=<]  ([xshift=-5]1.north east) to (3);
      \draw[diredge=<]  (3) to +(0,0.3);
    \end{pic}
    \]
    This completes the first part of the proof. It remains to define $\tinyground$ and $\tinyfrob$ so that $(\cps[\cat{C}\pure],Q,\tinyground,\tinyfrob)$ satisfies \eqref{eq:environment}, \eqref{eq:unique}, \eqref{eq:decoherence}, \eqref{eq:idempotent} and \eqref{eq:purification3}. We again take $\tinyground\colon A^*\otimes A\to I$ to be $\tinycap$. Note that this is indeed a morphism of $\cps[\cat C\pure]$ since it can be written in the required form:
$$
\begin{pic}
       \node (4) at (0.3, 0) {};
       \node (3) at (0.7, 0) {};
       \node (9) at (0, 0.5) {};
       \node (10) at (0.3, 0.5) {};
       \node (11) at (0.7, 0.5) {};
       \node (8) at (1, 0.5) {};
       \node (2) at (0, 1) {};
       \node (14) at (0.3, 1) {};
       \node (17) at (0.7, 1) {};
       \node (1) at (1,1) {};
       \node[box] (5) at (-0.2,0.15) {z};
       \node[box] (6) at (-0.4,0.7) {$\phi$};
       \node[box] (7) at (1.4,0.7) {$\phi$};
       \draw [diredge=<, in=90, out=-90] (8.center) to (3.center);
       \draw [diredge=<, in=-90, out=90] (4.center) to (9.center);
       \draw [diredge=<] (1.center) to (8.center);
       \draw [diredge=<] (9.center) to (2.center);
       \draw [diredge=<] (14.center) to (10.center);
       \draw [diredge=<] (11.center) to (17.center);
       \draw [diredge=<, in=-90, out=-90, looseness=2.00] (10.center) to (11.center);
       \draw [diredge, in=90, out=90, looseness=2.00] (14.center) to (17.center);
       \draw [diredge, in=90, out=90, looseness=2.00] (1.center) to (2.center);
       \draw [diredge=<, in=90, out=90, looseness=2.00] (6.north) to (7.north);
\end{pic}
$$
where $\phi$ has been chosen such that $\phi^\dagger\circ\phi=z$. Such a $\phi$ exists because $z$ is positive. Note that \eqref{eq:environment} and \eqref{eq:unique} are satisfied as before. Now take $F_\mathcal{A}=\mathcal{A}$ and let $\tinyfrob\colon Q(A)\to\mathcal{A}$ be given by 
\[
\begin{pic}
       \node (1) at (0,0) {}; 
       \node (3) at (0.6,0) {}; 
       \node (5) at (0.3,1) {}; 
       \node[greydot] (7) at (0.3,0.5) {};
       \draw[diredge, looseness=1.25] (7) to[out=-135,in=90] (1);
       \draw[diredge, looseness=1.25] (3) to[out=90,in=-45] (7);
       \draw[diredge=<, looseness=1.25] (7) to[out=90,in=-90] (5);
\end{pic}
\]
(which is a morphism of $\cps[\cat C\pure]$ for similar reasons as for $\tinyground$). Then \eqref{eq:decoherence} holds by definition of tensor product of Frobenius structures, and \eqref{eq:idempotent} also holds by the spider theorem for special Frobenius structures~\cite[Lemma~3.1]{coeckepaquette:naimark}:
\[
\begin{pic}
       \node (1) at (0,0) {}; 
       \node (3) at (0.6,0) {}; 
       \node (5)[greydot] at (0.3,1) {}; 
       \node[greydot] (7) at (0.3,0.5) {};
       \node (2) at (0,1.5) {}; 
       \node (4) at (0.6,1.5) {}; 
       \draw[diredge, looseness=1.25] (7) to[out=-135,in=90] (1);
       \draw[diredge, looseness=1.25] (3) to[out=90,in=-45] (7);
       \draw[diredge=<, looseness=1.25] (7) to[out=90,in=-90] (5);
       \draw[diredge=<, looseness=1.25] (5) to[out=135,in=-90] (2);
       \draw[diredge, looseness=1.25] (5) to[out=45,in=-90] (4);
\end{pic}
=
\begin{pic}
       \node[greydot] (1) at (-0.5,0.6) {};
       \node[greydot] (2) at (0.4,0.6) {};
       \draw[diredge=<] (-0.5,0) to (1);
       \draw[diredge=<] (1) to (-0.5,1.2);
       \draw[diredge] (0.4,0) to (2);
       \draw[diredge] (2) to (0.4,1.2);
       \draw[looseness=1.25] (1) to[out=0,in=180] (0.4,0.85) to[out=0,in=-90] (0.6,1);
       \draw[diredge=<,looseness=1.25] (0.6,1) to[out=90,in=180] (0.8,1.2) to[out=0,in=90] (1,1);
       \draw[looseness=1.25] (1,1) to[out=-90,in=0] (0.4,0.78) to[out=180,in=180,looseness=2] (2);
\end{pic}
\quad
\text{and}
\quad
\begin{pic}
      \node[dot] (t) at (0,.6) {};
      \node[dot] (b) at (0,0) {};
      \draw[diredge=<] (b.south) to +(0,-0.3);
      \draw[diredge] (t.north) to +(0,+0.3);
      \draw[diredgeend] (b.east) to[out=30,in=-90] +(.15,.3);
      \draw (t.east) to[out=-30,in=90] +(.15,-.3);
      \draw (b.west) to[out=-210,in=-90] +(-.15,.3);
      \draw[diredgeend] (t.west) to[out=210,in=90] +(-.15,-.3);
    \end{pic}
    =
    \begin{pic}
      \draw[diredge] (0,0) to (0,1.5);
    \end{pic}
\]

Finally (\ref{eq:purification3}) holds because it is precisely the requirement that every morphism is of the form
\[
    \begin{pic}
      \node[pure,morphism, vflip] (1) at (0,0.9) {$f$};
      \node[pure,morphism] (2) at (1,0.9) {$f$};
      \node[blackdot] (3) at (0.5,1.75) {};
      \node[whitedot] (4) at (0.5,.25) {};
      \draw [pure,diredge=<] (0.5,-.3) to (4.south) ;
      \draw [pure,diredge=<] (4) to[out=135,in=-90,looseness=0.75] (1.south);
      \draw [pure,diredge] (4) to[out=45,in=-90,looseness=0.75] (2.south);
      \draw [pure,diredge=<]  (1.north east) to[out=90,in=90,looseness=1.25] (2.north west) ;
      \draw [pure,diredge=<] (1.north west) to[out=90,in=-135,looseness=0.75] (3) ;
      \draw [pure,diredge] (2.north east) to[out=90,in=-45,looseness=0.75] (3) ;
      \draw [pure,diredge=<] (3) to (0.5,2.3);
    \end{pic}
  \]
  and is hence vacuously satisfied.
\end{proof}

\begin{definition}
  Let $\cat{C}$ be a compact dagger category, and $\cat{C}\pure$ a compact dagger subcategory.
  A \emph{decoherence structure} is an environment structure together with an object $F_\mathcal{A}$ and a morphism $\tinyfrob \colon A \to F_\mathcal{A}$ in $\cat{C}$ for each special dagger Frobenius structure $\mathcal{A}=(A,\tinymult,\tinyunit)$ in $\cat{C}\pure$, satisfying:
  \begin{align}
    \begin{pic}
      \node[greytriangle] (1) at (0,.5) {};
      \draw[mixed] (1) to (0,1) node[above] {$ F_{\mathcal{A} \otimes \mathcal{B}}$} ;
      \draw[pure] (1) to (0,0) node[below] {$A\otimes B$};
    \end{pic}
    & =
    \begin{pic}
      \node[whitetriangle] (1) at (0,.5) {};
      \draw[mixed] (1) to (0,1) node[above] {$ F_{\mathcal{A}}$} ;
      \draw[pure] (0,0) node[below] {$A$} to (1) ;
      \node[darkgreytriangle] (2) at (.5,.5) {};
      \draw[mixed] (2) to (.5,1) node[above] {$ F_{\mathcal{B}}$} ;
      \draw[pure] (.5,0) node[below] {$B$} to (2) ;
    \end{pic}
    &
    \begin{pic}
      \node[whitetriangle] (1) at (0,.5) {};
      \draw[mixed] (1) to (0,1) node[above] {$ F_{\mathcal{I}}$} ; ;
      \draw[pure] (1) to (0,0) node[below] {$I$};
    \end{pic}
    & =
    \qquad
    \label{eq:decoherence2}
    \\
    \begin{pic}
      \node[triangle] (1) at (0,.5) {};
      \node[invertedtriangle] (2) at (0,.9) {};
      \draw[mixed,diredge] (1) to (2) ;
      \draw[pure,diredge] (2) to (0,1.4) ;
      \draw[pure,diredge] (0,0) to (1) ;
    \end{pic}
    & =
    \begin{pic}
      \node[dot] (1) at (0,.7) {};
      \node[ground,scale=.75] (2) at (-0.6,1) {};
      \draw[pure,diredge] (1) to (0,1.4) ;
      \draw[pure,diredge] (0,0) to (1) ;
      \draw[pure,diredge] (1) to[out=180,in=-90,looseness=1] (2.south);
    \end{pic}
    & 
    \begin{pic}
      \node[invertedtriangle] (1) at (0,.5) {};
      \node[triangle] (2) at (0,.9) {};
      \draw[mixed,diredge] (2) to (0,1.4) ;
      \draw[mixed,diredge] (0,0) to (1) ;
      \draw[pure,diredge] (1) to (2) ;
    \end{pic}
    & =
   \begin{pic}
     \draw[mixed,diredge] (0,0) to (0,1.4) ;
   \end{pic}
   \label{eq:idempotent2}
  \end{align}
  A \emph{decoherence structure with purification} is a decoherence structure such that every morphism of $\cat{C}$ is of the form 
  \[
   \begin{pic}
     \node[pure, morphism] (1) at (0,.75) {$f$};
     \node[greytriangle] (2) at ([yshift=8]1.north east) {};
     \node[whiteinvertedtriangle] (3) at ([yshift=-8]1.south east) {};
     \node[ground, scale=0.7] (4) at ([yshift=8]1.north west) {};
     \draw[mixed] (3) to +(0,-.3);
     \draw[mixed] (2) to +(0,.3) ;
     \draw[pure] (3.north) to (1.south east);
     \draw[pure] (2.south) to (1.north east);
     \draw[pure] (4.south) to (1.north west);
   \end{pic}
  \]
  for some $f$ in $\cat{C}\pure$.
\end{definition}

Note that~\eqref{eq:decoherence2} entails $F_{\mathcal{A} \otimes \mathcal{B}}= F_{\mathcal{A}}\otimes  F_{\mathcal{B}}$ and $ F_{\mathcal{I}}=I$. Note also that in a decoherence structure with purification, each object of $\cat C$ must be of the form $F_\mathcal{A}$  for a special dagger Frobenius structure $\mathcal{A}$ in $\cat C\pure$.

From the universal property we now deduce that decoherence structures axiomatize the $\cps$ construction.

\begin{corollary}
  If a compact dagger category $\cat{C}$ comes with a compact dagger subcategory $\cat{C}\pure$ and a decoherence structure with purification, then there is an isomorphism $\cps[\cat{C}\pure] \simeq \cat{C}$ of compact dagger categories.
\end{corollary}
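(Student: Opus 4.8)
The plan is to imitate the (one‑line) proof of the corresponding corollary for $\cpm$: exhibit $\cat{C}$, equipped with the inclusion $\cat{C}\pure\hookrightarrow\cat{C}$ and the given decoherence structure, as an initial object of the category appearing in Theorem~\ref{thm:cpsuniversal}, and invoke the part of that theorem which provides a \emph{second} initial object $(\cps[\cat{C}\pure],Q,\tinyground,\tinyfrob)$. Since initial objects are unique up to unique isomorphism, this yields the desired isomorphism $\cps[\cat{C}\pure]\simeq\cat{C}$.

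Concretely, I would first instantiate Theorem~\ref{thm:cpsuniversal} with $D\colon\cat{C}\pure\to\cat{C}$ the inclusion functor, with $\tinyground$ the environment structure contained in the given decoherence structure, and with $(F_{\mathcal A},\tinyfrob)$ its decoherence data. Because $D$ is an inclusion we have $D(A)=A$, so the conditions \eqref{eq:environment}, \eqref{eq:unique}, \eqref{eq:decoherence}, \eqref{eq:idempotent} imposed on objects of the category in the theorem become verbatim the defining equations \eqref{eq:environment2}, \eqref{eq:unique2}, \eqref{eq:decoherence2}, \eqref{eq:idempotent2} of a decoherence structure, and the clause \eqref{eq:purification3} is exactly the requirement that the decoherence structure have purification. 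Hence $(\cat{C},D,\tinyground,\tinyfrob)$ is an object of the category of Theorem~\ref{thm:cpsuniversal} satisfying \eqref{eq:purification3}, and the theorem then declares it initial; the theorem separately supplies choices of $\tinyground,\tinyfrob$ making $(\cps[\cat{C}\pure],Q,\tinyground,\tinyfrob)$ initial. Unwinding what a morphism — hence an isomorphism — in this category is (a monoidal dagger functor commuting with the two copies of $D$ and sending $\tinyground,\tinyfrob$ to $\tinyground,\tinyfrob$, whose inverse is again such a functor), an isomorphism between these two initial objects is precisely an isomorphism $\cps[\cat{C}\pure]\simeq\cat{C}$ of compact dagger categories.

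I expect the only genuine work to be bookkeeping rather than mathematics: checking line by line that, under the identification of $D$ with the inclusion, each clause of ``decoherence structure with purification'' matches the corresponding clause of Theorem~\ref{thm:cpsuniversal}. The one point that genuinely deserves care is that Theorem~\ref{thm:cpsuniversal} presupposes that $\cat{C}\pure$ is positive‑dimensional — this is what makes $Q$, and therefore the statement ``$(\cps[\cat{C}\pure],Q,\tinyground,\tinyfrob)$ is initial'', meaningful at all — so I would either add positive‑dimensionality as an explicit hypothesis of the corollary or verify that the existence of a decoherence structure with purification on $\cat{C}$ already forces $\cat{C}\pure$ to be positive‑dimensional.
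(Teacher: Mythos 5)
Your proposal is correct and is essentially the paper's own proof: apply Theorem~\ref{thm:cpsuniversal} to the inclusion $\cat{C}\pure \hookrightarrow \cat{C}$, observe that the decoherence structure with purification makes $\cat{C}$ initial while the theorem makes $\cps[\cat{C}\pure]$ initial, and conclude by uniqueness of initial objects. Your remark about positive-dimensionality of $\cat{C}\pure$ being an implicit hypothesis is a fair and careful observation that the paper glosses over, but it does not change the argument.
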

\begin{proof}
  Applying Theorem~\ref{thm:cpsuniversal} to the inclusion $\cat{C}\pure \hookrightarrow \cat{C}$ shows that $\cps[\cat{C}\pure]$ and $\cat{C}$ are both initial, and hence isomorphic.
\end{proof}

\bibliographystyle{eptcs}
\bibliography{cpstaraxioms}

\end{document}